\colorlet{darkblue}{blue!90!black}
\colorlet{darkred}{red!90!black}
\colorlet{symbols}{black!50}
\definecolor{connection}{rgb}{0.7,0.1,0.1}
\tikzset{
root/.style={circle,fill=black!50,inner sep=0pt, minimum size=3mm},
        dot/.style={circle,fill=black,inner sep=0pt, minimum size=1.5mm},
        dotred/.style={circle,fill=black!50,inner sep=0pt, minimum size=2mm},
        var/.style={circle,fill=black!10,draw=black,inner sep=0pt, minimum size=3mm},
        kernel/.style={semithick,shorten >=2pt,shorten <=2pt},
        kernels/.style={snake=zigzag,shorten >=2pt,shorten <=2pt,segment amplitude=1pt,segment length=4pt,line before snake=2pt,line after snake=5pt,},
        rho/.style={densely dashed,semithick,shorten >=2pt,shorten <=2pt},
           testfcn/.style={dotted,semithick,shorten >=2pt,shorten <=2pt},
        renorm/.style={shape=circle,fill=white,inner sep=1pt},
        labl/.style={shape=rectangle,fill=white,inner sep=1pt},
        xic/.style={very thin,circle,fill=symbols,draw=black,inner sep=0pt,minimum size=1.2mm},
        xi/.style={very thin,circle,fill=blue!10,draw=black,inner sep=0pt,minimum size=1.2mm},
        xix/.style={crosscircle,fill=blue!10,draw=black,inner sep=0pt,minimum size=1.2mm},
	xib/.style={very thin,circle,fill=blue!10,draw=black,inner sep=0pt,minimum size=1.6mm},
	xie/.style={very thin,circle,fill=green!50!black,draw=black,inner sep=0pt,minimum size=1.6mm},
	xid/.style={very thin,circle,fill=symbols,draw=black,inner sep=0pt,minimum size=1.6mm},
	xibx/.style={crosscircle,fill=blue!10,draw=black,inner sep=0pt,minimum size=1.6mm},
	kernels2/.style={very thick,draw=connection,segment length=12pt},
	not/.style={thin,circle,fill=symbols,draw=connection,fill=connection,inner sep=0pt,minimum size=0.5mm},
	>=stealth,
        }
\def\DeclareSymbol#1#2#3{\expandafter\gdef\csname MH@symb@#1\endcsname{\tikz[baseline=#2,scale=0.15,draw=symbols,line join=round]{#3}}\expandafter\gdef\csname MH@symb@#1s\endcsname{\scalebox{0.7}{\tikz[baseline=#2,scale=0.15,draw=symbols,line join=round]{#3}}}}
\def\<#1>{\csname MH@symb@#1\endcsname}
\newenvironment{claim}[1][MM]
               {\list{$\bullet$}{%
  \setbox\@tempboxa\hbox{#1}\@tempdima\wd\@tempboxa%
  \setlength{\labelwidth}{\@tempdima}
  \advance\@tempdima by 1em%
  \setlength{\leftmargin}{\@tempdima}
  \setlength{\parsep}{1mm}\setlength{\itemindent}{0mm}%
  \setlength{\labelsep}{2mm}\setlength{\itemsep}{0mm}%
  \setlength{\topsep}{1mm}%
}}{\endlist}
\newtheorem{theorem}{Theorem}[section]
\newtheorem{lemma}[theorem]{Lemma}
\newtheorem{corollary}[theorem]{Corollary}
	\theoremstyle{definition}
\newtheorem{assumption}[theorem]{Assumption}
\newtheorem{definition}[theorem]{Definition}
\theoremstyle{remark}
\newtheorem{remark}[theorem]{Remark}
\def\scal#1{\langle #1 \rangle}
\def\db#1{[\![ #1 ]\!]}
\newcommand{\vn}[1]{{\vert\kern-0.3ex\vert\kern-0.3ex\vert #1 
    \vert\kern-0.3ex\vert\kern-0.3ex\vert}}
\newcommand{\bn}[1]{{[\kern-0.5ex] #1 
    [\kern-0.5ex]}}
\newcommand\N{\mathbb{N}}
\newcommand\bR{\mathbf{R}}
\newcommand\bone{\mathbf{1}}
\newcommand\cR{\mathcal{R}}
\newcommand\cK{\mathcal{K}}
\newcommand\cD{\mathcal{D}}
\newcommand\CD{\mathcal{D}}
\newcommand\cB{\mathcal{B}}
\newcommand\CB{\mathcal{B}}
\newcommand\cC{\mathcal{C}}
\newcommand\CC{{\mathcal{C}}}
\newcommand\cI{\mathcal{I}}
\newcommand\cJ{\mathcal{J}}
\newcommand\cS{\mathcal{S}}
\newcommand\cG{\mathcal{G}}
\newcommand\cW{\mathcal{W}}
\newcommand\cE{\mathcal{E}}
\newcommand\cP{\mathcal{P}}
\newcommand\cF{\mathcal{F}}
\newcommand\cA{\mathcal{A}}
\newcommand\scD{\mathscr{D}}
\newcommand\scB{\mathscr{B}}
\newcommand\scT{\mathscr{T}}
\newcommand\TT{\mathscr{T}}
\newcommand\scC{\mathscr{C}}
\newcommand\frL{\mathfrak{L}}
\newcommand\frs{\mathfrak{s}}
\newcommand\frI{\mathfrak{I}}
\def\eps{\varepsilon}
\def\Parts{\cP}
\def\C{\mathfrak{K}}
\newcommand{\R}{\mathbf{R}}
\newcommand{\T}{\mathbf{T}}
\newcommand{\E}{\mathbf{E}}
\def\d{\partial}
\def\id{\mathrm{id}}
\def\PPi{\boldsymbol{\Pi}}
\begin{document}

\title{A solution theory for quasilinear singular SPDEs}
\author{M\'at\'e Gerencs\'er${}^1$ and Martin Hairer${}^2$}
\institute{IST Austria, \email{mate.gerencser@ist.ac.at} \and 
Imperial College London, \email{m.hairer@imperial.ac.uk}}
\date{\today}

\maketitle

\begin{abstract}
We give a construction allowing to construct local renormalised solutions to general quasilinear
stochastic PDEs within the theory of regularity structures, thus greatly 
generalising the recent results of \cite{BDH,FGub,OWeb}. 
Loosely speaking, our construction covers quasilinear variants of all classes of equations
for which the general construction of \cite{H0,BHZ,CH} applies,
including in particular one-dimensional systems with KPZ-type nonlinearities driven by
space-time white noise. 
In a less singular and more specific case, we furthermore show that the
counterterms introduced by the renormalisation procedure are given by local functionals of the solution.
The main feature of our construction is that it allows to exploit a
number of existing results developed for the semilinear case, so that the number of additional
arguments it requires is relatively small.  
\end{abstract}

\tableofcontents

\section{Introduction}
Amidst the recent heightened interest in singular stochastic partial differential equations (SPDEs),
three different methods \cite{BDH,FGub,OWeb} have been developed
to extend the theory to quasilinear equations.
The first two of these worked with paracontrolled calculus, while \cite{OWeb} introduced a new variation of previous techniques
to treat singular SPDEs which is closer to the theories of rough paths and regularity structures,
but flexible enough to cover quasilinear variants.
For a comparison between them in terms of scope we refer the reader to the introduction of \cite{FGub},
but it should be noted that in a sense all of them deal with the `first interesting' case,
when the noise is just barely too rough for the product $a(u)\Delta u$ to make sense.
In particular, quasilinear variants of the KPZ equation, or for example the parabolic Anderson model in a generalised form in 3 dimensions, are all outside of the scope of these works.
One exception is the forthcoming work \cite{HendrikNew} which extends \cite{OWeb} to the next regime of regularity, which includes noises slightly better than
 space-time white noise in $1$ dimension (similar to the setting of our example \eqref{eq:example} below).

In the present article, we tackle this problem within the framework of regularity structures.
The generality in which we succeed in building local solution theories is, in some sense, 
optimal: loosely speaking, we show that if an equation can be solved with 
regularity structures and its solution has positive regularity,
then its quasilinear variants can also be solved (locally).
We deal with both the analytic and the probabilistic side of the theory in the sense that we show
that the general machinery developed in \cite{CH} can be exploited in order to produce random
models that do precisely fit our needs. Another major advantage of our approach is that its formulation
is such that it
allows to leverage many existing results from the semilinear situation without requiring us to reinvent
the wheel. This is why, despite its much greater generality, this article is significantly shorter than the
works mentioned above. 

The only disadvantage of our approach, compared to \cite{BDH,FGub,OWeb}, is that it is not obvious
at all \textit{a priori} why the counterterms generated by the renormalisation procedure should
be local in the solution. The reason for this is that our method relies on the introduction of
additional ``non-physical'' components to our equation, which are given by some non-local non-linear functionals
of the solution, and we cannot rule out in general that the counterterms depend on these non-physical terms.
We do however address the question of the precise form of the counterterms generated by the renormalisation
procedure in a relatively simple case where we verify that, provided that the renormalisation
constants are chosen in a specific way (which happens to be a choice that does still allow to show convergence
of the underlying renormalised model, although it differs in general from the BPHZ renormalisation introduced
in \cite{BHZ}), 
all non-local contributions
cancel out exactly. The reason for a lack of general statement is that the 
algebraic machinery developed in \cite{CH,BCCH}, which allows to show that counterterms are always local
in the semilinear case,
does not appear to be applicable in a simple way. 
However, we do expect that this is something
that could be addressed in future work.

The concrete example we consider is a slightly regularised version of the quasilinear variant
of the generalised KPZ equation, which formally reads as
\begin{equ}\label{eq:example}
(\partial_t-a(u)\partial_x^2)u=F_0(u)(\partial_x u)^2+F_1(u)\xi,\quad\text{on }(0,1]\times\T\;,\qquad u(0,\cdot) = u_0
\end{equ}
where $\xi$ is a translation invariant Gaussian noise on $\R\times\T$ with covariance function $\mathscr{C}$ satisfying $|\scC(t,x)|\leq(|t|^{1/2}+|x|)^{-3+\nu}$
for some $\nu>0$,
$u_0\in\cC^{\bar\nu}$ for some $\bar\nu>0$,
$a$ is a smooth function taking values in $\C$ for some compact $\C\subset (0,\infty)$,
and $F_0$ and $F_1$ are smooth functions.
The quasilinear equations considered in previous works \cite{BDH,FGub,OWeb} correspond to situations where $\nu>1/3$, $F_0=0$.
Let us take a compactly supported nonnegative symmetric (under the involution $x \mapsto -x$) 
smooth function $\rho$ integrating to 1,
set $\rho^\eps(t,x)=\eps^{-3}\rho(\eps^{-2} t,\eps^{-1}x)$,
and define $u^\eps$ as the classical solution of
\begin{equs}\label{eq:example reno}
(\partial_t-a(u^\eps)\partial_x^2)u^\eps
&=F_0(u^\eps)(\partial_x u^\eps)^2+F_1(u^\eps)(\rho^\eps\ast\xi)
\\
&\quad
-C^\eps_{a(u^\eps)}(aF_1'F_1-a'F_1^2+F_1^2F_0)(u^\eps),\qquad u(0,\cdot) = u_0
\end{equs}
where $C^\eps_c$ is some smooth function of $c\in\C$.
We then have the following (renormalised) well-posedness result for equation \eqref{eq:example},
which will be proved in Section \ref{sec:example}.
\begin{theorem}\label{thm:example}
There exist deterministic smooth functions $C^\eps_\cdot$ such that
for all $u_0\in\cC^{\bar\nu}$
there exists a random time $\tau>0$ such that 
$u^\eps$ converges in probability in $\cC([0,\tau]\times\T)\cap\cC^{1/2}_{loc}((0,\tau]\times\T)$ to a limit $u$.
Furthermore, with a suitable choice of $C^\eps_\cdot$, one can ensure that the limit $u$ is independent of $\rho$.
\end{theorem}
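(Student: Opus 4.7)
The plan is to reduce \eqref{eq:example reno} to a semilinear fixed-point problem by freezing the coefficient at a base point. For each $c\in\C$ let $K_c$ denote the heat kernel of $\partial_t-c\partial_x^2$; the family $\{K_c\}_{c\in\C}$ depends smoothly on $c$. Fixing a base point $z_0$ and writing $c_0=a(u^\eps(z_0))$, I would recast \eqref{eq:example reno} as convolution with $K_{c_0}$ of the right-hand side, plus a correction of the form $(a(u^\eps)-c_0)\partial_x^2 u^\eps$. Since $u_0\in\cC^{\bar\nu}$ with $\bar\nu>0$ and $a$ is smooth, the correction carries a small factor on a neighbourhood of $z_0$ and can be absorbed perturbatively; this is the manoeuvre that gives rise to the ``non-physical'' auxiliary components alluded to in the introduction.

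With this reformulation in hand, I would build a regularity structure adapted to the generalised-KPZ-type nonlinearity of \eqref{eq:example} containing integration operators $\cI_c$ parametrised by $c\in\C$ (equivalently, with $c$ entering smoothly into the structure group). The general framework of \cite{BHZ,CH} then supplies BPHZ-type models $\PPi^\eps_c$ converging in probability for each $c$, and one checks that the convergence is uniform in $c$ over the compact set $\C$ by exploiting the smooth dependence of the kernels on $c$. A standard contraction-mapping argument in a suitable space of modelled distributions, closed using the positive regularity of $u_0$, yields a unique local solution on a random interval $[0,\tau]$, and the convergence of $u^\eps$ in $\cC([0,\tau]\times\T)\cap\cC^{1/2}_{loc}((0,\tau]\times\T)$ follows from the reconstruction theorem.

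The main obstacle is the explicit identification of the counterterms. One enumerates the finitely many divergent trees in the regularity regime $\nu>0$ and computes their contributions under our family of models; each should factor as a scalar function of $c$ (to be absorbed into $C^\eps_c$) times a local polynomial in the abstract $u$-variable. A careful algebraic computation, tracking the $c$-dependence introduced by convolution against $K_c$ and the derivative structure arising from $\partial_x u$, is required in order to verify that these contributions combine precisely into the local functional $(aF_1'F_1-a'F_1^2+F_1^2F_0)(u^\eps)$ displayed in \eqref{eq:example reno}, with all non-local dependencies on the auxiliary components cancelling exactly. For $\rho$-independence, I would then adjust $C^\eps_c$ by a finite $\rho$-dependent additive function of $c$ to absorb the difference in BPHZ-type constants between two choices of mollification; this finite shift is the controlled deviation from pure BPHZ renormalisation mentioned in the introduction, and it leaves the limiting model (and hence the limit $u$) unchanged.
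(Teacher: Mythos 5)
Your proposed reformulation diverges from the paper's in a way that would not close. Freezing the coefficient at a fixed base point $z_0$, setting $c_0=a(u^\eps(z_0))$, and treating $(a(u^\eps)-c_0)\partial_x^2 u^\eps$ as a small perturbation is essentially the starting point of \cite{OWeb}; but the smallness of $a(u^\eps)-c_0$ near $z_0$ is an $L^\infty$ statement and does nothing to cure the ill-definedness of its product with $\partial_x^2 u^\eps$, which carries the same negative regularity $\approx -3/2+\kappa$ as $a(u)\partial_x^2 u$ and needs renormalisation of exactly the same kind. This is precisely what limits \cite{BDH,FGub,OWeb} to $\nu>1/3$, whereas the theorem covers all $\nu>0$. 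The paper's key move (Section~\ref{sec:equivalent}) is a different non-local reformulation: one posits $u = I(a(u),\hat f)$ with $I(b,f)(z) = \int P(b(z),z-z')f(z')\,dz'$, where the kernel parameter is evaluated at the \emph{output} point. Applying $\partial_t-a(u)\partial_x^2$ and using \eqref{eq:P c derivative} then produces correction terms built from $I_\ell(a(u),\hat f)$ and $I_1'(a(u),\hat f)$, which are \emph{smoothing} in $\hat f$, so the product $a(u)\partial_x^2 u$ never appears; the auxiliary fields $V_1,V_2,V_3$ in \eqref{eq:system} are exactly these non-local convolutions, not residuals of a pointwise freezing.

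The second gap is in the counterterm identification. A ``careful algebraic computation'' with the BPHZ constants does \emph{not} yield a local counterterm: the terms proportional to the non-local quantities $v_2=\cR V_2$ and $v_3=\cR V_3$ cancel, and the numerator of \eqref{eq:correction} becomes divisible by $q=1-a'(u)v_3$, only because the paper replaces the BPHZ constant for $\<Xi2s>$ by the value forced by the identity \eqref{e:defC1}, namely $C_{\<Xi2s>}^\eps(c)=c\,C_{\<IXi^2s>}^\eps(c,c)$, which holds exactly for the true heat kernel but not for the truncated $K^{(c)}$. This enforced identity is the announced deviation from BPHZ; your proposal instead attributes the deviation to the $\rho$-independence adjustment, which is a separate and final step. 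The remaining elements of your outline -- a $c$-parametrised regularity structure with smooth dependence, BPHZ-type models from \cite{BHZ,CH} with convergence uniform over the compact $\C$, and a contraction in weighted modelled-distribution spaces -- are consistent with Sections~\ref{sec:structure} and~\ref{sec:4} of the paper.
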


\begin{remark}
We would like to stress again that we only need the condition $\nu > 0$ in order to guarantee that
the counterterms created by the renormalisation of the underlying model are local functions of $u$.
The rest of the argument works down to $\nu > -{1\over 2}$ (including in particular the case of space-time 
white noise), at which point the conditions of \cite[Thm~2.14]{CH} are
violated and one expects a qualitative change of the scaling behaviour of the solution. 
Similarly, we consider a scalar equation driven by a single noise purely for the sake of notational 
convenience. The exact same proof also applies for example to systems of the type
\begin{equ}
\partial_t u_i= a_{ij}(u)\partial_x^2u_j + F_{ijk}^{(2)}(u)(\partial_x u_j)(\partial_x u_k)+
F_{ij}^{(1)}(u)(\d_x u_j) + F_{ij}^{(0)}(u)\xi_j\;,
\end{equ}
with $a$ taking values in some compact set of strictly positive definite symmetric matrices and 
implicit summation over $j,k$.
\end{remark}

The structure of the remainder of this article goes as follows. In Section~\ref{sec:equivalent}, we 
first give an equivalent formulation of a general quasilinear SPDE which is the main remark this
article is based on. The main purpose of this reformulation is to write the equation in integral
form in a way that resembles the mild formulation for semilinear problems. In particular, this
can be done in such a way that the product $a(u)\cdot \d_x^2 u$ never appears and is replaced instead
by seemingly more complicated terms that however exhibit better scaling / regularity properties.
In Section~\ref{sec:structure}, we then show how to build a suitable regularity structure allowing to 
formulate the fixed point problem derived in Section~\ref{sec:equivalent}. This is very similar to what
is done in \cite{H0,BHZ} with the unusual twist that each symbol represents an \textit{infinite-dimensional}
subspace of the resulting regularity structure, rather than a one-dimensional one.
The formulation of the fixed point problem is then done in Section~\ref{sec:4}. Finally, 
we treat a concrete example in Section~\ref{sec:example}, where we also verify ``by hand'' that in this case
the renormalisation procedure does indeed only produce local counterterms.

\subsection*{Acknowledgements}

{\small
MH gratefully acknowledges support by the Leverhulme Trust and by an ERC consolidator grant, project 615897.
}

\section{An equivalent formulation}
\label{sec:equivalent}

The main observation on which this article builds is that, at least for smooth drivers / solutions, 
a quasilinear equation is 
equivalent to another equation whose principal (smoothing) part does make sense even in the limit
when the driving noise is taken to be rough.
The `right-hand side' of this new equation may however exhibit ill-defined products
(sometimes even if the original right-hand side did not), but that situation is already closer to the 
ones that the theory of \cite{H0} was developed for.

To describe this alternative formulation, we restrict our attention to the case of perturbations of the 
heat equation on the one-dimensional torus $\T$, but it is straightforward to generalise this to
other situations.
In this case, one wants to solve initial value problems of the type
\begin{equ}\label{eq: 00}
(\partial_t-a(u)\partial_x^2)u=F(u,\xi)\quad\text{on }(0,1]\times\T\;,\qquad u(0,\cdot) = u_0\;,
\end{equ}
where $a$ is a smooth function taking values in $\C$ for some compact $\C\subset (0,\infty)$, $F$ is a subcritical (in the sense of \cite{H0,BHZ}) local nonlinearity, and $\xi$ is a noise term, which for \eqref{eq: 00} to make sense, is assumed to be smooth for the moment.

\begin{remark}\label{rem:powers}
Assuming that we are interested in noises $\xi \in \CC^{\alpha-2}$ for 
$\alpha \in (0,1)$, so that potential solutions are expected to be of class $\CC^\alpha$, 
$F$ being subcritical is equivalent to assuming that it is of the form
\begin{equ}
F(u,\xi) = F_0(u,\d_x u) + F_1(u)\xi\;,
\end{equ}
where $F_0\colon \R^2 \to \R$ and $F_1\colon \R \to \R$ are smooth functions
and the dependence of $F_0$ in its second argument is polynomial of degree strictly
less than $(2-\alpha)/(1-\alpha)$.
\end{remark}

It will be convenient to write the equation in a more `global' way: setting
$f = \bone_{t>0}F(u,\xi)+\delta \otimes u_0$, where $\delta$ is the Dirac mass at time 0 and both $F$ and $u_0$
are extended periodically to all of $\R$, \eqref{eq: 00} is equivalent to
\begin{equ}\label{eq: 01}
(\partial_t-a(u)\partial_x^2)u=f \quad\text{on } (-\infty,1] \times\R.
\end{equ}
For $c>0$, denote by $P(c,\cdot)$ the Green's function 
of $\partial_t-c\partial_x^2$ on $\T$. Note that $P$ is smooth as a function of $c$ away from
the origin and one has the identity
\begin{equ}\label{eq:P c derivative}
\tfrac{\partial^\ell}{\partial c^\ell}P(c,\cdot)
= \partial_x^{2\ell} \underbrace{P(c,\cdot)\ast\cdots\ast P(c,\cdot)}_{\ell+1\text{ times}}\;,
\end{equ}
where the convolutions are in space-time.
Introduce operators $I_\ell^{(k)}$ acting on smooth functions $b$ and $f$ by 
\begin{equ}\label{eq: I}
I_\ell^{(k)}(b,f)(z) = \int (\d_x^k \d_c^\ell P)(b(z),z-z')f(z')\,dz'\;.
\end{equ}
We will also use the shorthands $I = I^{(0)}_0$, $I' = I^{(1)}_0$, $I_1=I_1^{0}$, etc.
Note that these operators are linear in their second argument, but not in the first.
Note also that, by a simple integration by parts, one has the identities
\begin{equ}
I_\ell^{(k+m)}(b,f)(z) =
I_\ell^{(k)}(b,\d_x^m f)(z)\;.
\end{equ}
Even though $I(b,f)$ is of course not the same as the solution map to $(\partial_t-b\partial_x^2)u=f$
if $b$ is non-constant, 
it turns out that solving \eqref{eq: 01} is equivalent to solving
an equation of the type
\minilab{e:system}
\begin{equ}\label{eq: mod}
u=I(a(u), \hat f),
\end{equ}
where $\hat f =\bone_{t>0}\hat F(u,\xi)+\delta \otimes u_0$
for some modified (non-local) nonlinearity $\hat F$. 
Since $I$ does make perfect sense for arbitrary $b\in\cC^{0+}$ and $f\in\cC^{-2+}$ (which are the expected regularities of the coefficient and the right-hand side, respectively, even in the limit), this moves all the ill-defined terms into 
the definition of $\hat F$.

Verifying the equivalence is elementary as long as all functions involved are smooth: suppose
that $u$ satisfies \eqref{eq: mod} and apply $\partial_t-a(u)\partial_x^2$ to both sides of this equation.
Denoting the expression $(\partial_t-a(u)\partial_x^2)u$ by $f$, one then has 
\begin{equs}
f&=\hat f+\big((\partial_t-a(u)\partial_x^2)a(u)\big)I_1(a(u),\hat f)
\\
&\quad-a(u)|\partial_x(a(u))|^2I_2(a(u),\hat f)
-2a(u)\partial_x(a(u))I_1(a(u),\d_x \hat f) \label{e:calculation} \\
&=\hat f+ a'(u) f I_1(a(u),\hat f) - (aa'')(u) (\d_x u)^2 I_1(a(u),\hat f)
\\
&\quad-(a (a')^2)(u)|\partial_x u|^2\, I_2(a(u),\hat f)
-2 (aa')(u)\partial_x u \,I_{1}(a(u),\hat f) \;.
\end{equs}
One can rearrange the above as a fixed point equation for $\hat f$ by writing it as
\begin{equs}\label{eq: mod nl2}
\hat f &= (1 - a'(u) I_1(a(u),\hat f)) f + (aa'')(u)|\partial_x u|^2I_1(a(u),\hat f) \\
&\qquad+ (a (a')^2)(u)|\partial_x u|^2\, I_2(a(u),\hat f) + 2 (aa')(u)\partial_x u \,I_1'(a(u), \hat f)\;.
\end{equs}
Now we note that since $f$ is of the form 
$f(t,x) = \bone_{t>0}(F(u,\xi))(t,x) + \delta(t) u_0(x)$, where
$F=F(u,\xi)$ is a $\cC^1$ function on $[0,1]\times\T$, 
we can look for solutions to this fixed point problem that are also of
the form $\hat f(t,x) =\bone_{t>0} \hat F(t,x) + \delta(t) u_0(x)$.
To see this, define the operator
\begin{equ}\label{eq: Ihat}
\hat I_\ell^{(k)}(b,g)(z) = I_\ell^{(k)}(b,\delta \otimes g)(z) = \int (\d_c^\ell P_t)(b(z),x-x')\d_x^kg(x')\,dx'\;,
\end{equ}
where we use the convention $z=(t,x)$. (This is really how all the terms involving $\delta$ should be
interpreted in \eqref{eq: mod}--\eqref{eq: mod nl2}.)
The function $I_1(a(u),\hat F)$ is continuous and vanishes at time $0$, as does $\hat I_1(a(u),u_0)$ 
for any $u_0$ of strictly positive regularity, 
as one can see from \eqref{eq:P c derivative} for example. 
Thus \eqref{eq: mod nl2} can be written as a fixed point problem for $\hat F$:
\minilab{e:system}
\begin{equs}\label{eq: mod nl3}
\hat F &= \big(1 - a'(u) \,( I_1(a(u),\hat F) + \hat I_1(a(u),u_0)\big) \bone_{t>0}F(u,\xi) \\
&\qquad+ (aa'')(u)|\partial_x u|^2\, ( I_1(a(u),\hat F) +\hat I_1(a(u),u_0) )\\
&\qquad+ (a (a')^2)(u)|\partial_x u|^2\, (I_2(a(u),\hat F)+\hat I_2(a(u),u_0)) \\
&\qquad+ 2 (aa')(u)\partial_x u \,( I_1'(a(u), \hat F)+\hat I_1'(a(u),u_0) )\;.
\end{equs}
If $u_0$ is sufficiently smooth, say $\cC^3$,
one can write the system \eqref{e:system} as a fixed point problem
\begin{equ}\label{eq: mod nl4}
(u,\hat F)=\cA_{u_0,\xi}(u,\hat F),
\end{equ}
where $\cA_{u_0,\xi}$ is a contraction on a ball of $(\cC_0^{4/3}\times\cC_0^{-1/3})(\T_t)$ for small times $t$, where $\T_t=(-\infty,t]\times\T$ and $\cC_0^\alpha$ is consists of the space-time $\alpha$-H\"older regular distributions that vanish for negative times.
Indeed, for the first coordinate of $\cA_{u_0,\xi}$ this is immediate from classical Schauder estimates.
For the second coordinate it suffices to notice that thanks to Remark \ref{rem:powers}
the right-hand side of \eqref{eq: mod nl3} is locally Lipschitz continuous from $(\cC_0^{4/3}\times\cC_0^{-1/3})(\T_t)$ to $\cC_0^0(\T_t)$,
which is in turn embedded into $\cC_0^{-1/3}(\T_t)$, and the norm of this injection is proportional to a positive power of $t$.

Using a version of this argument with temporal weights, it is straightforward to 
show that \eqref{eq: mod nl4} admits a unique local solution $(u,\hat F)$.
Furthermore, the preceding calculations show that,
as long as the function $g$ given by 
\begin{equ}
g =a'(u) \,( I_1(a(u),\hat F) + \hat I_1(a(u),u_0)\;,
\end{equ}
is strictly smaller than $1$, one does indeed have
\begin{equ}\label{eq:10}
\big((\partial_t-a(u)\partial_x^2)u\big)(t,x)=(F(u,\xi))(t,x)+\delta(t)u_0(x)\;.
\end{equ}
Since, by the same reasoning as above, $g$ is continuous and $g\rightarrow0$ as $t\rightarrow0$, 
the claim follows.
Moreover, if $|u_0|_{\cC^{0+}}\leq C$ for some constant $C$, then for any fixed $t_1>0$, the time $t_0:=\sup\{t\in[0,t_1]:\,|g(t,x)|<1\,\forall x\in\T\}>0$ can be bounded
from below in terms of the $\cC^{-2+}([-1,t_1]\times \T)$ norm of $\hat F$.
A reasonable solution theory of \eqref{eq: mod nl4} -- which of course will require a renormalisation of the right-hand side of \eqref{eq: mod nl4} -- is expected to imply that for some $t_1>0$, this norm is uniformly bounded
over a given family of smooth approximations of the `true' noise $\xi$, and hence $t_0$ is uniformly 
bounded away from 0.

It is not difficult to convince oneself that this argument is quite robust.
For example, if in \eqref{eq: 00} the operator $\partial_x^2$ is replaced by $\partial_x^{2k}$ for some $k\in\mathbb{N}$, or if in higher dimensions, $a(u)$ is matrix-valued and acts on the Hessian of $u$ in a non-degenerate way, similar arguments show the analogous equivalence, of course with $I$ built from a suitably modified family of parametrised kernels.
It therefore suffices to solve equations of the type \eqref{e:system}, which one can do using the framework of regularity structures as we will demonstrate in the remainder of this article.

\section{Regularity structures with continuous parameter-dependence}
\label{sec:structure}

It should be clear at this point that we would like to encode in our regularity structure the integration against all kernels 
$P(c,\cdot)$, as well as some of their derivatives with respect to $x$ and $c$.
Since there is a continuum of them and since one wants to have some control over the dependence on $c$, this 
requires a  modification of the construction in \cite{H0}.

The starting point of our construction however is very similar to that given in \cite{H0,BHZ} and we quickly recall 
it here, mainly to fix notations.
We fix a dimension $d\geq1$ and on it, a scaling $\frs\in\N^d$.
We assume that we are given a finite index set $\frL = \frL_+ \sqcup \frL_-$
as well as a map $\alpha \colon \frL \to \R\setminus\{0\}$ which is positive on $\frL_+$ and negative on $\frL_-$.
We build from this a set of symbols $\cF$ by decreeing it to be the minimal set satisfying the following properties. 
\begin{claim}
\item There are symbols $\Xi_i$ and $X^k$
belonging to $\cF$ for all $i\in \frL_-$ and any $d$-dimensional multiindex $k$. We also write $\bone=X^0$
and $X_i = X^{e_i}$ with $e_i$ the $i$th canonical basis vector.
\item For any $\tau,\tau',\tau''\in\cF$, one also has $\tau\tau'\in\cF$, and $\tau(\tau'\tau'')$ and $(\tau\tau')\tau''$ are identified. We also identify $X^k X^\ell$ with $X^{k+\ell}$, $X^k \tau$ with $\tau X^k$, and $\bone \tau$ with $\tau$. 
\item For any $j\in \frL_+$, any $d$-dimensional multiindex $k$ and any $\tau\in\cF$, one also has 
a symbol $\cI_j^{(k)}\tau\in\cF$.
\end{claim}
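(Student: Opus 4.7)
The plan is to construct $\cF$ as a quotient of a freely generated set by a congruence realising the prescribed identifications, and then to verify the three listed properties by exhibiting concrete normal forms. Concretely, I would first build an auxiliary set $\tilde\cF$ by recursion. Let $\tilde\cF_0$ consist of the atoms $\Xi_i$ for $i\in\frL_-$ together with a symbol $X^k$ for every multiindex $k\in\N^d$ (the notation $\bone=X^0$ being treated merely as shorthand). Given $\tilde\cF_n$, let $\tilde\cF_{n+1}$ be the union of $\tilde\cF_n$ with all formal products $\tau\cdot\tau'$ for $(\tau,\tau')\in\tilde\cF_n\times\tilde\cF_n$ and all formal symbols $\cI_j^{(k)}\tau$ for $j\in\frL_+$, $k\in\N^d$, $\tau\in\tilde\cF_n$. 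Setting $\tilde\cF=\bigcup_n\tilde\cF_n$ yields a set carrying an \emph{a priori} non-associative formal product and a family of unary operators $\cI_j^{(k)}$.

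Next, I would let $\sim$ be the smallest equivalence relation on $\tilde\cF$ that is compatible with both operations (a \emph{congruence}) and that contains all pairs prescribed by the statement: associativity $\tau(\tau'\tau'')\sim(\tau\tau')\tau''$, additivity $X^kX^\ell\sim X^{k+\ell}$, the sliding rule $X^k\tau\sim\tau X^k$, and the unit law $X^0\tau\sim\tau$. Such a smallest congruence exists as the intersection of all congruences containing the given pairs (the total relation provides an example). Setting $\cF=\tilde\cF/{\sim}$ yields a set on which both operations descend unambiguously and which by construction satisfies all three listed closure properties.

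The main technical content is to check that $\sim$ does not force unintended collapses, so that formally distinct atoms, products and integrations remain distinct in $\cF$. I would do this by constructing a concrete system of canonical representatives: to every class in $\cF$ associate a rooted combinatorial tree whose leaves carry labels in $\frL_-$ (corresponding to the $\Xi_i$-factors), whose edges are decorated by pairs $(j,k)\in\frL_+\times\N^d$ (corresponding to the operators $\cI_j^{(k)}$), and whose vertices carry an additional multiindex in $\N^d$ (recording the accumulated $X^k$-factor at that vertex). Each of the four identifications above is exactly what is needed to reduce an arbitrary formal expression to this normal form, and one then verifies by induction on the complexity parameter that every element of $\tilde\cF_n$ admits such a representative and that two elements are $\sim$-equivalent iff they give rise to the same decorated tree. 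This combinatorial bookkeeping -- and in particular tracking how the sliding rule interacts with associativity so that every $X^k$-factor can be unambiguously absorbed at the vertex at which its branch is rooted -- is the step I expect to be most delicate.

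Once the bijection between $\cF$ and the set of decorated trees is established, minimality is essentially immediate: any set $\cF'$ satisfying the three listed properties must, by iterating its closure under products and the operators $\cI_j^{(k)}$, contain a representative of every decorated tree, giving a canonical surjection from $\cF$ onto the minimal such $\cF'$. Since the defining properties themselves impose precisely the four identifications listed above, this surjection is in fact a bijection, which completes the construction.
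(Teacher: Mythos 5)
This is a \emph{definition}, not a theorem: the paper simply decrees $\cF$ to be the minimal set closed under the listed operations and offers no proof, because none is required. The existence of a minimal such set is immediate by either of the two standard arguments --- intersect all subsets (of the universe of finite formal expressions in the atoms, products, and $\cI_j^{(k)}$'s) satisfying the three closure properties, or build $\cF$ from below as $\bigcup_n\cF_n$ with $\cF_0$ the atoms and $\cF_{n+1}$ the result of one round of applying products and integrations to $\cF_n$. You have instead built a free (non-associative) term algebra, quotiented by a generated congruence, and then proposed to exhibit canonical representatives by a decorated-tree normal form. That route is not wrong, but it is substantially more machinery than the paper uses or needs, and it addresses a well-posedness question the paper never poses (the paper never requires a canonical representative of each symbol, only that the set of symbols exists and is closed under the stated operations).

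The one place where your extra machinery does contain a genuine technical subtlety is the normal form, which you yourself flag as the delicate step but leave as a sketch. The remark immediately following the definition in the paper emphasises that, unlike \cite{H0,BHZ}, the product here is \emph{not} commutative: $\tau\bar\tau$ and $\bar\tau\tau$ are not identified; only $X^k$ is allowed to slide past other factors. Consequently the decorated trees you propose must be \emph{planar} --- the non-polynomial children of each vertex form an ordered sequence, not a multiset --- and one must check carefully that the sliding rule $X^k\tau\sim\tau X^k$, propagated through associativity, absorbs every $X^k$-factor into the vertex decoration without allowing any reordering of the remaining factors. Your description of ``rooted combinatorial trees'' does not specify planarity; if the trees are taken to be unordered in the BHZ fashion, the normal form would identify strictly more than the paper's definition allows. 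This needs to be made explicit for the argument to go through, and it is precisely the point where your construction diverges from the semilinear setting you are presumably modelling it on.
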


\begin{remark}
It is important here that unlike \cite{H0,BHZ} we do \textit{not} identify $\tau \bar \tau$ with $\bar \tau \tau$! 
The freedom to leave these as separate symbols will be very convenient later on. 
\end{remark}

We naturally associate degrees $|\cdot|$ to these symbols by postulating that
\begin{equ}[e:degree]
|\Xi_i|=\alpha_i,\quad |X^k|=|k|_\frs,\quad|\tau\bar\tau|=|\tau|+|\bar\tau|,
\quad|\cI_j^{(k)}\tau|=|\tau|+\alpha_j - |k|_\frs\;.
\end{equ}
We then consider the map $\cG \colon \cF \to \Parts(\cF)$ (the set of all subsets of $\cF$) defined as the minimal
map ($\Parts(\cF)$ being ordered by inclusion) satisfying the following properties.
\begin{claim}
\item One has $\tau \in \cG(\tau)$ for every $\tau \in \cF$, and one has 
$\cG(\bone) = \{\bone\}$, $\cG(X_i) = \{\bone, X_i\}$, $\cG(\Xi_i) = \{\Xi_i\}$.
\item One has $\cG(\tau \bar \tau) = \{\sigma \bar \sigma\,:\, \sigma \in \cG(\tau)\;\&\; \bar \sigma \in \cG(\bar \tau)\}$.
\item One has $\cG(\cI^{(j)}_\ell \tau) = \{\cI^{(j)}_\ell \sigma\,:\, \sigma \in \cG(\tau)\}\cup \{X^k\,:\, |k| < |\cI_\ell^{(j)}\tau|\}$.
\end{claim}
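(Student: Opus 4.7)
The statement defines $\cG$ as the minimal map satisfying three closure clauses, so the real content to prove is that such a minimum exists and can be identified with the recursion those clauses suggest. My plan is to combine an abstract existence argument with a structural induction that produces an explicit recursive formula for $\cG$, and then to check that the two definitions coincide.

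First I would consider the family $\mathfrak{M}$ of maps $\cG\colon\cF\to\Parts(\cF)$ for which clauses (1)--(3) hold with "$\supseteq$" in place of equality. The constant map $\cG(\tau)=\cF$ lies in $\mathfrak{M}$, and $\mathfrak{M}$ is clearly closed under pointwise intersection, so the pointwise intersection $\cG^\star$ of all members of $\mathfrak{M}$ is again in $\mathfrak{M}$ and is the unique pointwise-minimum element; this is the object the statement refers to.

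Next I would define, by structural induction on $\cF$, a candidate map $\cG_{\mathrm{rec}}$ by setting
\[
\cG_{\mathrm{rec}}(\bone)=\{\bone\},\quad \cG_{\mathrm{rec}}(X_i)=\{\bone,X_i\},\quad \cG_{\mathrm{rec}}(\Xi_i)=\{\Xi_i\}\;,
\]
and then using the right-hand sides of (2) and (3) to define $\cG_{\mathrm{rec}}(\tau\bar\tau)$ and $\cG_{\mathrm{rec}}(\cI_\ell^{(j)}\tau)$. The recursion is well-founded because every symbol of $\cF$ is obtained from the generators by finitely many operations. The statement $\tau\in\cG_{\mathrm{rec}}(\tau)$ is then verified by a second parallel induction using $\bone\in\cG_{\mathrm{rec}}(\bone)$ and the two recursive clauses.

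The step I expect to be hardest is verifying that $\cG_{\mathrm{rec}}$ descends to a well-defined function on $\cF$, i.e.\ is compatible with the imposed identifications $\bone\tau=\tau$, $X^kX^\ell=X^{k+\ell}$, $X^k\tau=\tau X^k$, and associativity of products. Each check reduces to a short set-theoretic identity: for example $\cG_{\mathrm{rec}}(\bone\tau)=\{\bone\bar\sigma:\bar\sigma\in\cG_{\mathrm{rec}}(\tau)\}=\cG_{\mathrm{rec}}(\tau)$ because $\bone\bar\sigma=\bar\sigma$ already holds in $\cF$, and associativity is handled by expanding both parenthesisations into a common "Cartesian" set of triple products $\sigma_1\sigma_2\sigma_3$ with $\sigma_i\in\cG_{\mathrm{rec}}(\tau_i)$. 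Once these compatibility checks are in place, $\cG_{\mathrm{rec}}\in\mathfrak{M}$, so $\cG^\star\subseteq\cG_{\mathrm{rec}}$; the reverse inclusion follows by induction on $\tau$, since each element produced by the recursive formulae is forced into $\cG(\tau)$ for every $\cG\in\mathfrak{M}$. Therefore $\cG^\star=\cG_{\mathrm{rec}}$, and in particular the "$\supseteq$" inclusions in (1)--(3) are in fact equalities at the minimum, which is precisely the content of the statement.
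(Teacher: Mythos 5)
This item in the paper is a \emph{definition}, not a theorem: the authors simply declare $\cG$ to be the minimal map satisfying the three clauses and move on, offering no argument for existence, uniqueness, or well-definedness. There is therefore no ``paper proof'' to compare against. Your proposal supplies exactly the justification the paper leaves implicit, and it is correct: you pass to the family $\mathfrak{M}$ of maps satisfying the clauses with ``$\supseteq$'' in place of ``$=$'', note it is nonempty and closed under pointwise intersection so that a pointwise minimum exists, build the recursive candidate $\cG_{\mathrm{rec}}$, and show by the usual two-sided structural induction that it coincides with the minimum. The only genuinely nontrivial point is the one you flag yourself: that $\cG_{\mathrm{rec}}$ is well-defined modulo the identifications in $\cF$ (associativity, $\bone\tau=\tau$, $X^kX^\ell=X^{k+\ell}$, $X^k\tau=\tau X^k$). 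Your sketch of these checks is right, and it is worth observing in passing that they go through cleanly precisely because the paper deliberately does \emph{not} identify $\tau\bar\tau$ with $\bar\tau\tau$ — with that extra identification clause (2) would need a symmetrisation that the formula as written does not provide. In short: the argument is sound and is more careful than the paper, which treats the matter as self-evident.
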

The motivation for this definition is that these properties of the set $\cG(\tau)$ guarantee
that every element of the structure group associated to our regularity structure as in \cite{BHZ} 
maps any given symbol $\tau$ into the linear span of $\cG(\tau)$.
This allows us to give the following definition of a subcritical set $\cW$, which one should think of
any subset of $\cF$ that generates an actual regularity structure (one in which the set of possible
degrees is locally finite and bounded from below).

\begin{definition}
A subset $\cW \subset \cF$ is said to be \textit{subcritical} if it satisfies the following properties.
\begin{claim}
\item If $\tau \in \cW$, then $\cG(\tau) \subset \cW$.
\item For every $\gamma \in \R$, the set $\{\tau \in \cW \,:\, |\tau| <\gamma\}$ is finite.
\end{claim}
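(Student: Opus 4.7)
The plan is to verify the two subcriticality properties for the specific $\cW \subset \cF$ generated by the quasilinear fixed point problem, say \eqref{eq: mod nl3}. This $\cW$ is built as the smallest subset of $\cF$ containing the noise symbol $\Xi$, enough polynomial symbols $X^k$, and closed under the products and integration operators $\cI_j^{(k)}$ corresponding to the kernels $P$, $\d_c^\ell P$, and their $x$-derivatives that appear explicitly on the right hand side of the fixed point equation.

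For the closure property $\cG(\tau) \subset \cW$ for every $\tau \in \cW$, I would argue by structural induction on the construction of $\tau$. The base cases $\tau \in \{\bone, X_i, \Xi_i\}$ are immediate from the definition of $\cG$. For the inductive step, the recursive rules
\begin{equ}
\cG(\tau\bar\tau) = \{\sigma\bar\sigma : \sigma \in \cG(\tau),\ \bar\sigma \in \cG(\bar\tau)\},\quad
\cG(\cI^{(k)}_\ell\tau) = \{\cI^{(k)}_\ell\sigma : \sigma \in \cG(\tau)\} \cup \{X^m : |m|_\frs < |\cI^{(k)}_\ell\tau|\},
\end{equ}
reduce the verification to closure of $\cW$ under products, under the $\cI^{(k)}_\ell$ appearing in the equation, and inclusion of all polynomial symbols $X^m$ of sufficiently small degree. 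All three hold by construction, provided the generating set of $\cW$ is rich enough in polynomials, which is automatic once we fix a target regularity for the fixed point problem.

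For the finiteness property, I would invoke the subcriticality of the nonlinearity $F$ in the sense of \cite{H0, BHZ}. The degree map defined in \eqref{e:degree} is additive on products, while each abstract integration operator $\cI^{(k)}_\ell$ shifts the degree by $\alpha_j - |k|_\frs$, which for the relevant operators is strictly bounded below by a fixed positive constant in the subcritical regime of Remark~\ref{rem:powers}. A standard tree-counting argument, as in \cite[Sec.~8]{H0}, then shows that for every $\gamma$ the set $\{\tau \in \cW \,:\, |\tau| < \gamma\}$ is finite, since any such symbol admits an encoding as a decorated tree of bounded depth and branching.

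The main obstacle is keeping the counting effective despite the presence of infinitely many candidate integration operators $\cI^{(k)}_\ell$ and the polynomial dependence of $F_0$ on $\d_x u$ of potentially high degree (bounded however by Remark~\ref{rem:powers}). The identity \eqref{eq:P c derivative} shows that each $c$-derivative corresponds to an extra space-time convolution by $P$, so the net degree shift of each $\cI^{(k)}_\ell$ is at least that of $\cI$, which keeps the effective number of dangerous operations finite and hence the counting argument quantitative.
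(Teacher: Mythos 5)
These two bullet points are not a theorem in the paper---they are the \emph{defining conditions} of the notion of a subcritical subset $\cW \subset \cF$, so there is no proof in the paper to compare your argument against. The assertion that the quasilinear fixed point problem does in fact determine a normal subcritical set is the sentence immediately following the definition, and it is disposed of by citation to \cite{H0,BHZ} rather than by direct argument. What you have written is a plausible sketch of why that citation applies, which is a fine thing to check, but it answers an adjacent question rather than the one posed, and it reconstructs an argument the paper deliberately does not give.

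On the substance of your sketch: the $\cG$-closure argument by structural induction is correct in outline. For the finiteness counting, one inaccuracy is worth flagging. You claim that the net degree shift of each $\cI^{(k)}_\ell$ is at least that of $\cI$, and you appeal to \eqref{eq:P c derivative} to support this. But the degree formula in \eqref{e:degree} gives $|\cI_j^{(k)}\tau| = |\tau| + \alpha_j - |k|_\frs$, which is completely insensitive to the parameter-derivative index $\ell$ (so $c$-derivatives indeed cost nothing) but does decrease with $|k|_\frs$: the operator $\cI' = \cI^{(1)}$ improves degree by $\beta - 1$, strictly less than the shift $\beta$ of $\cI$. The identity \eqref{eq:P c derivative} is an analytic statement about the kernels and plays no role in the symbolic degree bookkeeping. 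What keeps the counting effective is simply that all integration operators appearing in \eqref{e:system} have $|k|_\frs < \beta$, so each produces a strictly positive degree shift, together with the bound on the polynomial degree of $F_0$ in $\d_x u$ from Remark~\ref{rem:powers}.
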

It is said to be \textit{normal} if, whenever $\tau \bar\tau \in \cW$,
one has $\{\tau,\bar \tau\} \subset \cW$ and, whenever $\cI^{(j)}_\ell\tau \in \cW$,
one has $\tau \in \cW$.
\end{definition}

As shown in \cite{H0,BHZ}, every locally subcritical  stochastic PDE (or system thereof) naturally
determines a normal subcritical set $\cW$.
From now on, we consider $\cW$ to be fixed and we only ever consider elements $\tau \in \cW$.

\subsection{A regularity structure}

In \cite{H0,BHZ},
one then constructs a regularity structure by taking the vector space $\scal{\cW}$ generated by $\cW$ as the structure
space (graded by the notion of degree given in  \eqref{e:degree}) and endowing it with a suitable structure group.
In our situation, to encode parameter dependence, we instead assign to each element of $\cW$ a
typically infinite-dimensional subspace of the structure space.
In order to encode this,
we first define the `number of parameters' $[\tau]$ in a symbol $\tau$ recursively by setting
\begin{equ}{}
[X^k]=[\Xi_i]=0,\quad[\tau\bar\tau]=[\tau]+[\bar\tau],\quad[\cI_j^{(k)}\tau]=[\tau]+1.
\end{equ}
\begin{remark}
One could in principle encode some parametrisation of the noises as well, by setting $[\Xi_i]=1$, or we
could even allow the number of parameters to depend on the element of $\frL$ we consider.
Since this generality is not used in the sequel, we refrain from doing so here.
\end{remark}

We also assume that we
are given a real Banach space $\CB$ and we write $\CB_k$ for the $k$-fold tensor product of $\CB$
with itself, completed under the projective cross norm.
In particular, we have a canonical dense embedding of $\CB_k\otimes \CB_\ell$ into $\CB_{k+\ell}$.
We also use the convention $\CB_0=\R$.
Given a normal subcritical set of symbols $\cW$, we then construct
a regularity structure from it in such a way 
that each symbol $\tau \in\cW$ determines an infinite-dimensional subspace $T_\tau$ of the structure 
space $T$, isometric to $\CB_{[\tau]}$. To wit, we set
\begin{equ}\label{eq:T}
T=\bigoplus_{\alpha} T_\alpha\;,\quad
T_\alpha:=\bigoplus_{|\tau| = \alpha} T_\tau\;,\quad
T_\tau := \CB_{[\tau]}\otimes\scal{\tau}\;,\quad
\end{equ}
and equip the spaces $T_\alpha$ with their natural norms. Here, we wrote $\scal{\tau}$ for the one-dimensional
real vector space with basis $\tau$. (By the definition of subcriticality, there are
only finitely many symbols $\tau$ with $|\tau| = \alpha$, so that the $T_\alpha$ are naturally endowed with
a Banach space structure. This is not the case for $T$ itself though, but we view it as a topological vector 
space in the usual way.) For $\tau$ with $[\tau] = 0$,
we also identify $T_\tau$ with $\scal{\tau}$.

The space $T$ comes equipped with a number of natural operations. 
For every $i \in \{1,\ldots,d\}$, we have an abstract differentiation $D_i$
acting on $\scal{\cF}$
by setting $D_i X_j = \delta_{ij}\bone$, $D_i \bone = 0$, 
$D_i \cI_j^{(k)}(\tau) = \cI_j^{(k+e_i)}(\tau)$, and then extending it to all other symbols
by enforcing that Leibniz's rule holds. 
For any $\tau\in\cW$ such that $D_i \tau \in \scal{\cW}$ and any $b\in \CB_{[\tau]}$, we then set
\begin{equ}
\scD_i (b\otimes\tau) =b\otimes D_i\tau\;.
\end{equ}
This is indeed well-defined since $D_i \tau$ is a linear combination of elements $\sigma$
with $[\sigma]=[\tau]$.
Similarly, for the abstract product, whenever $\tau,\bar\tau,\tau\bar\tau\in\cW$, 
$b\in \CB_{[\tau]}$, $\bar b\in \CB_{[\bar\tau]}$, we set
\begin{equ}
(b\otimes\tau) (\bar b\otimes\bar\tau) = (b \otimes \bar b)\otimes \tau\bar \tau\;,
\end{equ}
with $b \otimes \bar b$ interpreted as an element of $\CB_{[\tau\bar \tau]}$.
(Here it is convenient that $\tau\bar \tau$ and $\bar \tau\tau$ aren't identified since it avoids being forced
to deal with symmetric tensor products.)
Finally, we have a large number of abstract integration operators: 
for any $j\in \frL_+$ and any $b\in \CB$, 
a map $\cI_j^{(k),b}$ is defined as the linear extension of
\begin{equ}
\cI_j^{(k),b} (\bar b\otimes\tau) := (b\otimes \bar b)\otimes  \cI_j^{(k)}\tau\;,
\end{equ}
defined on those $T_\tau$ for which $\cI_j^{(k)}\tau \in \cW$.

So far we have not addressed the structure group at all, but its inductive construction as in \cite[Thm~5.14]{H0} is virtually identical in our setting.
More precisely, as in \cite[Defs~4.6 \&\ 5.25]{H0}
the group $G$ consists of those continuous linear operators $\Gamma \colon T \to T$
satisfying the following properties.
\begin{claim}
\item For any $\alpha \in \R$, one has $(\Gamma - \id) \colon T_\alpha \to T_{<\alpha}$.
\item One has $\Gamma \bone = \bone$, $\Gamma \Xi_i = \Xi_i$ and there are constants $c_i$ such that 
$\Gamma X_i = X_i - c_i \bone$.
\item For any two symbols $\tau$, $\bar \tau$ in $\cW$ such that $\tau\bar\tau \in \cW$ and any 
$a \in T_\tau$, $\bar a \in T_{\bar \tau}$, one has $\Gamma(a  \bar a) = (\Gamma a)  (\Gamma \bar a)$.
\item For any $\tau \in \cW$ such that $D_i \tau \in \scal{\cW}$ and any $a \in T_\tau$, one has
$\Gamma \scD_i a = \scD_i \Gamma a$.
\item For any $\tau \in \cW$ such that $\cI_\ell^{(k)} \tau \in \cW$, any $a \in T_\tau$
and any $b \in \CB$, one has 
\begin{equ}
(\Gamma \cI_\ell^{(k),b} - \cI_\ell^{(k),b}\Gamma ) a \in \scal{\{X^k\,:\, k \in \N^d\}}\;.
\end{equ}
\end{claim}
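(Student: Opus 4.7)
The plan is to mirror the inductive construction of the structure group from \cite[Thm~5.14]{H0} and \cite[Sec~5]{BHZ}, with the extra bookkeeping needed for the infinite-dimensional fibres $T_\tau = \CB_{[\tau]} \otimes \scal{\tau}$. The task is to exhibit a non-empty family of continuous linear operators $\Gamma$ on $T$ satisfying properties (1)--(5), and to verify that this family is closed under composition and inversion, hence forms a group.

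First I would proceed by induction on the degree of symbols in $\cW$. Property (2) pins down $\Gamma$ on $\bone$, $\Xi_i$, $X_i$ uniquely modulo the scalar freedom $c_i$. The multiplicativity (3), together with the identifications $X^k X^\ell = X^{k+\ell}$ and $\bone \tau = \tau$, then forces $\Gamma X^k$ on all polynomials and extends $\Gamma$ to products of symbols of known degree. At the integration step, property (5) permits a choice: $\Gamma \cI_\ell^{(k),b} \tau - \cI_\ell^{(k),b} \Gamma \tau$ may be prescribed as any element of $\scal{\{X^m : |m|_\frs < |\cI_\ell^{(k)}\tau|\}}$ (the degree bound is forced by (1)), and this polynomial correction is a free parameter of the construction. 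Property (4) is then automatic from (3) and the Leibniz rule defining $\scD_i$ on each elementary building block.

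Next I would verify that the operators built above are closed under composition. For $\Gamma_1, \Gamma_2$ satisfying (1)--(5), properties (1), (3), (4) are preserved by $\Gamma_1 \Gamma_2$ by direct inspection, and (2) follows from the triangularity in (1). For (5), writing $\Gamma_j \cI_\ell^{(k),b} = \cI_\ell^{(k),b} \Gamma_j + R_j$ with $R_j$ mapping into $\scal{\{X^m\}}$, one uses the forced form of $\Gamma$ on polynomials to conclude that $R_1 \Gamma_2 + \Gamma_1 R_2$ still maps into $\scal{\{X^n\}}$, so $(\Gamma_1 \Gamma_2) \cI_\ell^{(k),b} - \cI_\ell^{(k),b}(\Gamma_1 \Gamma_2)$ again lands in the polynomial subspace. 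Inverses are built via standard triangular inversion in the degree filtration, which terminates at each degree because only finitely many symbols of any given degree exist by subcriticality.

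The main obstacle is the continuous dependence on the parameter $b \in \CB$. In the semilinear case one works with characters on a Hopf algebra built from finitely many integration operators, whereas here the operators $\cI_\ell^{(k),b}$ form a continuous family indexed by $b$, and the polynomial corrections appearing in (5) depend on $b$ through the tensor slot added to $\CB_{[\cI_\ell^{(k)}\tau]}$. Controlling this dependence, and ensuring compatibility with the projective cross norm on $\CB_{[\tau]} \otimes \CB_{[\bar\tau]} \hookrightarrow \CB_{[\tau\bar\tau]}$ used in the multiplicative property, is where the specific tensor structure of $T_\tau$ pays off. Verifying this compatibility throughout the induction, while keeping all axioms (1)--(5) simultaneously satisfied, is the main technical content beyond \cite{H0,BHZ}.
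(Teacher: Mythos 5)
Your proposal matches the paper's approach: the paper itself gives no detailed argument here, simply asserting that ``as in \cite[Thm~5.14, Def~5.25]{H0}, one can show that this is indeed a group,'' and your sketch is precisely the inductive construction, composition-closure, and triangular-inversion argument that citation points to, augmented with the correct observation that the new work is tracking the tensor slot in $\CB_{[\tau]}$ through the projective cross norm. One small imprecision worth flagging: property (4) is not fully ``automatic'' at the construction stage --- the polynomial corrections chosen in (5) for $\cI_\ell^{(k),b}\tau$ and $\cI_\ell^{(k+e_i),b}\tau$ must be selected compatibly with $D_i$ --- though it \emph{is} automatic when verifying closure under composition, which is what you actually use it for.
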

As in \cite{H0}, on can show that this is indeed a group. The definitions of $G$ and $\cG$ also guarantee that,
for any $\tau \in \cW$, any $\Gamma\in G$ maps $T_\tau$ to $\bigoplus_{\sigma \in \cG(\tau)}T_\sigma$,
which is indeed a subspace of $T$ by the assumption on $\cW$.

From now on, we write $\TT$ for the regularity structure with structure space $T$ and structure group $G$
given as above with the specific choice
\begin{equ}[e:defBn]
\CB= \cC^{N}(\C)\;,
\end{equ}
for some compact parameter space $\C$, which is assumed for simplicity
to be a subset of a $\R^{d_1}$, as well as some 
sufficiently large $N  > 0$ to be determined later.

\subsection{Admissible models}

We assume henceforth that $\frL_+$ and $\frL_-$ are singletons, and therefore omit the lower indices in $\Xi$, $\cI$. 
This is purely for the sake of notational convenience, this section immediately extends to the general case. 
We also omit $k$ in $\cI^{(k)}$ and $\cI^{(k),b}$ if $k=0$
and we set $\alpha=|\Xi|$ and $\beta=|\cI\tau|-|\tau|$.

\begin{assumption}\label{asn: K}
We are given a family of kernels $(K^{(c)})_{c\in \C}$, which,
along with their derivatives with respect to $c$ up to any finite order,
are uniformly compactly supported and $\beta$-smoothing in the 
sense of \cite[Ass.~5.1]{H0}.
\end{assumption}

\begin{remark}
Think of $K^{(c)}$ as the heat kernel with parameter $c$ as in \eqref{eq:P c derivative}.
These are of course not compactly supported and do not satisfy \cite[Ass.~5.4]{H0}. However, it is always possible
to choose kernels $K^{(c)}$ satisfying these properties and such that 
their projection onto $\Lambda=[-1,\infty) \times \T^{d-1}$ (obtained by adding all integer translates)
agrees with the heat kernel,
so that if $\eta$ is a distribution supported on $\R_+ \times \T^{d-1}$,
one has $K^{(c)} \ast \eta = P(c,\cdot)\ast \eta$ on $[0,1]\times \T^{d-1}$.
One then also has
\begin{equ}\label{eq:approx green}
(\partial_t-c\Delta)K^{(c)}=\delta_0+f^{(c)},
\end{equ}
where $\delta_0$ is the Dirac mass at the space-time origin and $f^{(c)}$ are
compactly (and away from the origin) supported smooth functions, depending smoothly on $c$.
\end{remark}

Furthermore, for any distribution $\zeta$ on $\C$, any $c\in\C$, and any $\ell\in\mathbb{N}^{d_1}$,
we write
\begin{equ}
K^{\zeta}:=\zeta(K^{(\cdot)}),
\quad\quad
K^{c;\ell}:=K^{\partial^\ell\delta_c}.
\end{equ}
By the assumption on $K$, $K^\zeta$ is also $\beta$-smoothing in the sense of \cite[Ass.~5.1]{H0}
and, when considering its decomposition $K^\zeta=\sum_{n\geq0} K^\zeta_n$,
one has a bound of the type $|D^k K^\zeta_n|\lesssim 2^{n(|\frs|-\beta+|k|_\frs)}|\zeta|_{\cC^{-N_0}}$
for any fixed $N_0 > 0$.

As our notation suggests, we want the maps $\cI^{\zeta}$ to correspond to integrations against the kernels $K^\zeta$,
which is encoded in the definition of admissibility in the present setting.
\begin{definition}\label{def:admiss}
In the above setting, a model $(\Pi,\Gamma)$ is admissible for $\TT$ if, for all $\alpha\in A$, $\tau\in \cW$, such that $|\tau|=\alpha$ and $\cI\tau\in\cW$,
for all $\sigma\in T_{\tau}$, $\zeta\in\cC^{-N}$,
$x\in\R^d$, 
and $\varphi\in C_0^\infty$ the following identity holds
\begin{equ}
(\Pi_x\cI^{\zeta} \sigma)(\varphi)=
(K^{\zeta}\ast\Pi_x\sigma)(\varphi)-\int\sum_{|k|_\frs\leq \alpha+\beta}\frac{(y-x)^k}{k!}\Pi_x\sigma \big(D^kK^{\zeta}(x-\cdot)\big)\varphi(y)\,dy.
\end{equ}
\end{definition}
One can define the maps $\cJ^\zeta$ as in \cite{H0}, with $K$ therein replaced by $K^\zeta$.
With this notation the second term on the right-hand side above can be also written as $(\Pi_x\cJ^\zeta(x)\sigma)(\varphi)$.

In the following we borrow the notations $\psi_x^\lambda$, $\|\Pi-\Pi'\|_{\gamma;\, B}$, $\|\Gamma-\Gamma'\|_{\gamma;\,B}$ from \cite{H0}, and denote by $\scB$ the set $\cB_{-\lfloor\alpha\rfloor}$ of test functions
considered there. (This is in order to prevent confusion with the scale of spaces $\cB_k$.)
In fact, the lower indices in the norms of the models will usually be omitted for brevity,
since the dependence on them does not play any role in our discussion.

\subsection{Constructing models}
In principle, if one has a sufficiently robust way of building a model (or a family of models with some continuity properties)
for the (usual) regularity structure determined by $\cW$, one can also build an admissible
model for the parametrised regularity structure $\scT$.
Such a `robust way of building models' is developed in great generality in \cite{CH}.
To be self-contained regarding the assumptions required to recall some of its results,
we restrict our attention to the Gaussian case and refer the reader to \cite{CH} for more general noises that fit in the theory.

\begin{assumption}\label{asn:noise}
Suppose we are given a centred, Gaussian, translation invariant, $\cS'(\R^d)$-valued random variable $\xi$,
such that there exists a distribution $\scC$ whose singular support is contained in $\{0\}$,
which satisfies
\begin{equ}
\E\big(\xi(f)\xi(g)\big)=\scC\Big(\int f(z-\cdot)g(z)\,dz\Big)
\end{equ}
for all test functions $f,g\in\cS(\R^d)$.
Writing $z\mapsto\scC(z)$ for the smooth function that determines $\scC$ away from $0$, 
it is furthermore assumed that any test function $g$ satisfying $D^kg(0)=0$ for all multiindex $k$ with 
$|k|_\frs<-|\frs|-2\alpha$, one has
\begin{equ}
\scC(g)=\int\scC(z)g(z)\,dz.
\end{equ}
Finally, there exists a $\kappa>0$, such that for all multiindex $k$
\begin{equ}
\sup_{0<|z|_\frs\leq 1}|D^k\scC(z)|\,|z|_\frs^{|k|_\frs-2\alpha-\kappa}<\infty.
\end{equ}
\end{assumption}
The final assumption on $\cW$ is what is referred to as super-regularity in \cite{CH}, which in the present
setting reads as follows.
Define, similarly to $[\cdot]$, the `number of noises' $\db{\tau}$ in a symbol $\tau$ recursively by
\begin{equ}{}
\db{X^k}=0,\quad \db{\Xi}=1,\quad\db{\tau\bar\tau}=\db{\tau}+\db{\bar\tau},\quad\db{\cI^{(k)}\tau}=\db{\tau}.
\end{equ}
\begin{assumption}\label{asn:supreg}
All $\tau\in\cW$ with $[\![\tau]\!]\geq2$ satisfy
$|\tau|>\alpha$ and $|\tau|>-|\frs|/2$.
If $\db{\tau}\geq3$, then also $|\tau|>-(|\frs|+\alpha)$, while if $\db{\tau}=2$, then also $|\tau|>-2(|\frs|+\alpha)$ holds. 
\end{assumption}

Take, as in the introduction, a compactly supported nonnegative symmetric smooth function $\rho$ integrating to 1,
and set $\rho^\eps(t,x)=\eps^{-|\frs|}\rho(z_1\eps^{-\frs_1},\ldots,z_d\eps^{-\frs_d})$.
Under the above assumptions, we wish to construct a family of admissible models $(\hat \Pi^\eps,\hat \Gamma^\eps)_{\eps\in[0,1]}$
that is continuous in a suitable sense in the $\eps\rightarrow0$ limit, and which satisfy
$\hat \Pi_z^\eps\Xi=\rho^\eps\ast\xi$
(here and below we use the natural convention of $\rho^0\ast$ denoting the identity).

Denote $N_0=N+d_1+1$,
for a finite set $\tilde B\subset \cC^{-N_0}$ let $S_{\tilde B}$ be the set of `simple' elements of the form
$a=\big(\bigotimes_{i=1}^{[\tau]}\zeta_i\big)\otimes\tau$, with $\tau\in\cW$, $\zeta_i\in \tilde B$,
and let $S=\bigcup_{\tilde B} S_{\tilde B}$ (notice that $S\nsubseteq T$ since one has $N_0 > N$!).
Any $a\in S_{\tilde B}$ can be mapped to an element $\iota(a)$ of the structure space $T_{\tilde B}$ for
the regularity structure $\scT_{\tilde B}$ built from $\frL_+=\tilde B$, $\alpha(\frL_+)=\{\beta\}$, by setting recursively
\begin{equ}
\iota(\Xi)=\Xi,\quad\iota(X^k)=X^k,\quad\iota(a\bar a)=\iota(a)\iota(\bar a),\quad
\iota(\cI^{(\ell),\zeta}a)=\cI_\zeta^{(\ell)}\iota(a).
\end{equ}
Let $Z^\eps = (\Pi^\eps,\Gamma^\eps)$ for ${\eps\in[0,1]}$ be the family of BPHZ models for $\scT_{\tilde B}$ as constructed in \cite{BHZ,CH},
which satisfy in particular $\Pi_z^\eps\Xi=\rho^\eps\ast\xi$.
One can then define the random distributions
\begin{equ}[e:constr]
\bar\Pi^\eps_x a:=\Pi^\eps_x\iota(a),
\end{equ}
for $a\in S$. Note that formally the right-hand side also depends on $\tilde B$
(the regularity structure $\scT_{\tilde B}$ in which $\iota(a)$ takes values depends on it,
as well as the model $Z^\eps$), but our construction is such that different choices of $\tilde B$
yield the same right hand side in \eqref{e:constr}.
By \cite{CH}, the random fields $\bar\Pi_x$ satisfy the bounds
\begin{equs}[eq:model 1st bounds]
\sup_{0\neq a\in S}|a|^{-p}\E\sup_{x,\lambda,\psi}\lambda^{-p|\tau|}|(\bar\Pi^\eps_x a)(\psi_x^\lambda)|^p&\lesssim 1,
\\
\sup_{0\neq a\in S}|a|^{-p}\E\sup_{x,\lambda,\psi}\lambda^{-p|\tau|}|((\bar\Pi_x^\eps-\bar\Pi_x^0) a)(\psi_x^\lambda)|^p&\lesssim \eps^{p\theta},
\end{equs}
with some $\theta>0$, where here and below the second supremum is taken over $x$ in some
compact set, $\lambda\in(0,1]$, and $\psi\in\scB$.
The random field $\bar\Pi$ can then be turned into an admissible model for $\scT$ in the following sense.

\begin{theorem}\label{thm:models}
There exist admissible models $\hat Z^\eps = (\hat \Pi^\eps,\hat \Gamma^\eps)$ with $\eps\in[0,1]$ for $\scT$ such that for all $a\in S\cap T$, $\hat\Pi_x^\eps a=\bar\Pi_x^\eps a$ almost surely, and that one has the bounds
\begin{equs}
\E(\|\hat\Pi^\eps\|+\|\hat\Gamma^\eps\|)^p&\lesssim 1,
\\
\E(\|\hat\Pi^\eps-\hat\Pi^0\|+\|\hat\Gamma^\eps-\hat\Gamma^0\|)^p&\lesssim \eps^{p\theta}.
\end{equs}
\end{theorem}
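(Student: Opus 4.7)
The plan is to define $\hat Z^\eps$ by multilinear extension of the BPHZ models $\bar\Pi^\eps$ from the simple elements $S$ to the full parameter-dependent structure space $T$, exploiting the canonical continuous embedding $\cC^N(\C) \hookrightarrow \cC^{-N_0}(\C)$ (which holds since $N_0 = N + d_1 + 1 > N$ and $\C$ is compact) together with the projective tensor structure of $\CB_k = \cC^N(\C)^{\hat\otimes k}$.

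First, for fixed $\tau \in \cW$ with $[\tau] = k$, I would observe that the assignment
\begin{equation*}
(\zeta_1, \ldots, \zeta_k) \mapsto \bar\Pi_x^\eps\big((\zeta_1 \otimes \cdots \otimes \zeta_k) \otimes \tau\big)
\end{equation*}
is $k$-linear, and that \eqref{eq:model 1st bounds} combined with the embedding above gives
\begin{equation*}
\Big(\E \sup_{\lambda, \psi} \lambda^{-p|\tau|} \big|\bar\Pi_x^\eps\big((\zeta_1 \otimes \cdots \otimes \zeta_k) \otimes \tau\big)(\psi_x^\lambda)\big|^p\Big)^{1/p} \lesssim \prod_{i=1}^k |\zeta_i|_{\cC^N(\C)}\;.
\end{equation*}
The universal property of the projective tensor product then provides a unique continuous linear extension of this multilinear map to $\CB_k$, which I take as $\hat\Pi_x^\eps(\cdot \otimes \tau)$; agreement with $\bar\Pi^\eps$ on $S \cap T$ is then automatic, and applying the same extension argument to the differences $\bar\Pi^\eps - \bar\Pi^0$ inherits the $\eps^\theta$-rate. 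The field $\hat\Gamma^\eps$ is built in parallel from the positive-renormalisation characters $\hat f_x^\eps$ as in \cite[\S8]{H0} and \cite{BHZ}: at each recursion step the character value on $\cI^{(k),b}\sigma$ is essentially a Taylor coefficient of $\hat\Pi_x^\eps \cI^{(k),b}\sigma$ at $x$, and hence depends multilinearly on $b \in \CB$ and on the parameter labels of $\sigma$, so the same projective extension yields $\hat\Gamma^\eps$.

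The axioms defining the structure group $G$, together with admissibility in the sense of Definition~\ref{def:admiss}, are all identities which, on $S \cap T$, reduce to the corresponding identities for $Z^\eps$ on $\scT_{\tilde B}$, via the translation $\iota(\cI^{(k),\zeta}a) = \cI^{(k)}_\zeta \iota(a)$ and, for $\zeta = b \in \cC^N(\C)$, the identity $K^b = \int b(c) K^{(c)}\,dc$. Since both sides of each identity are continuous and multilinear in the parameter labels, they persist under the projective-tensor extension, so there is nothing further to check once the basic bounds are in place.

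The main obstacle is therefore not the algebraic identities themselves but tracing through the full BPHZ construction of \cite{BHZ, CH}---in particular the negative renormalisation $M^\eps$ that turns the canonical lift into a genuine model---to verify that every algebraic operation (coproducts, twisted antipodes, convolutions with negative characters) preserves multilinear dependence on the $\zeta$-labels with the correct quantitative bounds. Intuitively this is clear since those operations are composed of tensor products and pointwise multiplications, but it must be done explicitly; the freedom to take $N$ arbitrarily large in the definition $\CB = \cC^N(\C)$ is precisely what allows enough $c$-derivatives to absorb those produced by the recursions, and the uniformity of \eqref{eq:model 1st bounds} in the simple element $a$ then yields the global bounds on $\|\hat\Pi^\eps\|$ and $\|\hat\Gamma^\eps\|$ via a standard Kolmogorov-type chaining argument over the supremum parameters $(x, \lambda, \psi)$ and over a countable dense subset of the unit ball of each $\CB_k$.
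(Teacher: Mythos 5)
Your overall strategy---extend from the simple elements $S$ to $T$ by exploiting the uniform bounds \eqref{eq:model 1st bounds} and the tensor structure of $\CB_k$---is the right idea, but the concrete implementation you propose diverges from the paper's and has a gap in the place where the real work happens.

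The paper does not invoke the universal property of the projective tensor product directly, and it does not chain over a countable dense subset of the unit ball of $\CB_k$. Instead it first restricts to the simple elements $a_{c,\ell}(\tau)=\big(\bigotimes_i\partial^{\ell_i}\delta_{c_i}\big)\otimes\tau$ indexed by the \emph{finite-dimensional} parameter $c\in\C^{[\tau]}$ (with $|\ell_i|\le N$). Applying \eqref{eq:model 1st bounds} to the difference $a_{c,\ell}(\tau)-a_{\bar c,\ell}(\tau)$ yields a Lipschitz-in-$c$ moment estimate, and a classical Kolmogorov continuity argument over $\C^{[\tau]}$ then produces a modification $c\mapsto\hat\Pi^\eps_x a_{c,0}(\tau)$ that is, for almost every $\omega$, a smooth function of $c$ (of class $\cC^N$, thanks to $N_0=N+d_1+1$) simultaneously with the required supremum bound in $(x,\lambda,\psi)$. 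Only \emph{after} this step is a general $a=(\bigotimes_i\zeta_i)\otimes\tau$ with $\zeta_i\in\CB$ handled, and then simply by pairing the distribution $\bigotimes_i\zeta_i$ against the smooth function $c\mapsto\hat\Pi^\eps_x a_{c,0}(\tau)$. This pairing is a deterministic operation applied pathwise, so admissibility and the quantitative bounds are inherited automatically; no infinite-dimensional Kolmogorov argument is needed, and it is not clear that the infinite-dimensional chaining you gesture at would give a version with the required almost-sure properties without substantial additional work. The finite-dimensional Kolmogorov step over $c$ is the missing hinge in your proposal.

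Your identified ``main obstacle''---tracing the negative renormalisation $M^\eps$, coproducts and twisted antipodes through the parameter dependence---is in fact a red herring: the paper uses the results of \cite{CH} entirely as a black box via the bound \eqref{eq:model 1st bounds}, which is uniform in the simple element $a$ and does not require re-opening the BPHZ construction. Likewise, for $\hat\Gamma^\eps$ the paper does not pass through the positive characters of \cite{BHZ}; it constructs $\hat\Gamma^\eps_{xy}$ directly by the explicit recursion of \cite[Thm~5.14]{H0} (the formula \eqref{eq:Gamma}, with $K$ replaced by $K^{\zeta_1}$ and invoking \cite[Lem~5.21]{H0}), using the already-established bounds on $\hat\Pi^\eps$ and the multiplicativity of the projective cross norm at the product step. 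Your character-based route is plausible but would again require the multilinearity-preservation argument you flag as a concern, whereas the paper's recursion sidesteps it.
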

\begin{proof}
Define the set $S'\subset S$ similarly to $S$, but with $\cC^{-N_0}$ replaced by $\{\partial^\ell\delta_c:\,c\in\C,|\ell|\leq N\}\subset \cC^{-N_0}$.
For $\tau\in\cW$,  $c\in\C^{[\tau]}$, $\ell\in (\N^{d_1})^{[\tau]}$ with $|\ell_i|\leq N$,
denote $a_{c,\ell}(\tau)=\big(\bigotimes_{i=1}^{[\tau]}\partial^{\ell_i}\delta_{c_i}\big)\otimes\tau$.
From \eqref{eq:model 1st bounds}, we have
\begin{equ}
\E\sup_{x,\lambda,\psi}
\lambda^{-p|\tau|}|
\big(\bar\Pi^\eps_x
(a_{c,\ell}(\tau)-a_{\bar c,\ell}(\tau) \big)
(\psi_x^\lambda)|^p
\lesssim|c-\bar c|^p,
\end{equ}
for any $c,\bar c\in\C^{[\tau]}$.
Choosing $p$ large enough, by Kolmogorov's continuity theorem one has a continuous modification $(\hat \Pi_xa_{c,\ell}(\tau))_{c\in\C^{[\tau]}}$ 
such that the admissibility condition is satisfied almost surely,
and that one has the bound
\begin{equ}
\E\sup_{x,\lambda,\psi}
\sup_{c\in\C^{[\tau]}}
\lambda^{-p|\tau|}|
(\hat\Pi^\eps_x
a_{c,\ell}(\tau))
(\psi_x^\lambda)|^p
\lesssim 1.
\end{equ}
Note that a generic element of $S\cap T$ is of the form $a=\big(\bigotimes_{i=1}^{[\tau]}\zeta_i\big)\otimes\tau$, with $\zeta_i\in \cC^{-N}=\CB$. Hence on $S\cap T$ we can define
\begin{equ}
\hat\Pi_x^\eps a:=\big(\textstyle{\bigotimes_{i=1}^{[\tau]}}\zeta_i\big)(c\mapsto\hat\Pi_x^\eps a_{c,0}(\tau)),
\end{equ}
and extending these maps to all of $T$ by linearity and continuity, we get maps $\hat \Pi_x^\eps$ that are admissible
and that satisfy 
\begin{equ}
\E\sup_{x,\lambda,\psi}
\sup_{0\neq a\in T}|a|^{-p}
\lambda^{-p|\tau|}|
(\hat\Pi^\eps_x
a)
(\psi_x^\lambda)|^p
\lesssim 1.
\end{equ}
The corresponding bounds on the differences $\hat\Pi_x^\eps-\hat\Pi_x^0$ is obtained similarly, so it remains to treat the maps $\hat\Gamma^\eps$.

We proceed inductively. 
The definition of, and the appropriate bounds on, $\hat\Gamma_{xy}^\eps \tau$ if $\tau=\Xi$ or $X^k$, are trivial. 
Given $\hat\Gamma_{xy}^\eps(\zeta\otimes\tau)$ and $\hat\Gamma_{xy}^\eps(\bar\zeta\otimes\bar\tau)$ with the right bounds,
we set
\begin{equ}
\hat\Gamma_{xy}\big((\zeta\otimes\bar\zeta)\otimes\tau\bar\tau\big)
=\big(\hat\Gamma_{xy}^\eps\big(\zeta\otimes\tau\big)\big)\big(\hat\Gamma_{xy}^\eps\big(\bar\zeta\otimes\bar\tau\big)\big),
\end{equ}
which one can bound by
\begin{equs}
\|\hat\Gamma_{xy}\big((\zeta\otimes\bar\zeta)\otimes\tau\bar\tau\big)\|_m
&\lesssim\sum_{m_1+m_2=m}
\|x-y\|_\frs^{|\tau|-m_1}|\zeta|_{\CB_{[\tau]}}
\|x-y\|_\frs^{|\bar\tau|-m_2}|\zeta|_{\CB_{[\bar \tau]}}
\\
&\lesssim\|x-y\|_\frs^{|\tau\bar\tau|-m}|\zeta\otimes\bar\zeta|_{\CB_{[\tau\bar\tau]}},
\end{equs}
where  we used our assumption on the spaces $\CB_k$ to obtain the second line.
Given $\hat\Gamma_{xy}^\eps\zeta\otimes\tau$, we also set $\hat\Gamma_{xy}^{\eps}(\zeta\otimes\scD^i\tau) =\scD_i\hat\Gamma_{xy}^{\eps}(\zeta\otimes\tau)$, for which the correct bounds follow automatically.

The only step to finish the induction is thus to define and bound $\hat\Gamma^\eps_{xy}\zeta\otimes\cI\tau$,
provided $\hat\Gamma^\eps_{xy}\zeta'\otimes\tau$ are known.
This is done as in \cite[Thm~5.14]{H0}: for $\zeta_1\in \CB$, $a=\zeta'\otimes\tau$, we set
\begin{equ}\label{eq:Gamma}
\hat\Gamma^\eps_{xy}\cI^{\zeta_1}a=\cI^{\zeta_1}a+\cI^{\zeta_1}(\hat\Gamma^\eps_{xy}a-a)+\big(\cJ^{\zeta_1}(x)\hat\Gamma_{xy}^\eps a-\hat\Gamma_{xy}^\eps\cJ^{\zeta_1}(y)a\big)\,.
\end{equ}
The first term on the right-hand side is harmless.
Bounding the second one is immediate:
\begin{equ}
\|\cI^{\zeta_1}(\hat\Gamma^\eps_{xy}a-a)\|_m\lesssim|\zeta_1|_{\CB}\|\hat\Gamma^\eps_{xy}a-a\|_{m-\beta}\lesssim|\zeta_1|_{\CB}|\zeta'|_{\CB_{[\tau]}}\|x-y\|^{|\tau|+\beta-m}
\end{equ}
thanks to the assumed bound on $\hat\Gamma^\eps_{xy}\zeta'\otimes\tau$. Using again the assumptions on the spaces $\CB_k$, this is precisely the required bound.
To bound the third term on the right-hand side of \eqref{eq:Gamma}, it suffices to recall \cite[Lem~5.21]{H0},
with the kernel $K$ therein replaced by $K^{\zeta_1}$.
Having the required bounds on elements of the form $\hat\Gamma_{xy}^\eps(\zeta_1\otimes\zeta'\otimes\cI\tau)$,
one can extend $\hat\Gamma_{xy}^\eps$ to all $a\in T_{\cI\tau}$ once again via linearity and continuity.
It is straightforward to check that the above defined maps $\hat\Gamma^\eps_{xy}$ do indeed belong to $G$, and hence the proof is finished.
\end{proof}

\begin{remark}\label{rem:renormalisation}
Let us comment briefly on the renormalisation procedure implicit in the construction
\eqref{e:constr}.
In the standard situation considered in \cite{BHZ,CH}, the BPHZ renormalisation
procedure assigns to each symbol $\tau \in \cW$ with $\tau \neq \bone$ and 
$|\tau| \le 0$ a constant $C^\eps_\tau$. (In the notation of \cite[Eq.~6.22]{BHZ}, one 
has $C^\eps_\tau = g_-(\PPi^\eps)(\iota_\circ \tau) = \E (\PPi^\eps\tau)(0)$,
with $\PPi^\eps$ the canonical lift of the mollified noises.) 
This choice then allows to define
a renormalised model by \cite[Thms~6.17,~6.27]{BHZ} which was shown in \cite[Thms~2.14,~2.30]{CH} to enjoy very strong
stability properties. 

The construction given above is essentially the same, but now each symbol $\tau$
determines a smooth \textit{function} $C^\eps_\tau \colon \C^{[\tau]} \to \R$,
where 
\begin{equ}[e:formulaC]
C^\eps_\tau(c) = \E \PPi^\eps\big(\big(\textstyle{\bigotimes_{i=1}^{[\tau]}}\delta_{c_i}\big)\otimes\tau\big)(0)\;.
\end{equ}
The construction of the renormalised model is then the same as in \cite{BHZ}.
\end{remark}

\section{Lifting the operator $I$}\label{sec:4}

We continue within the setting of the previous section.
Given now that we have abstract 
integration operators $\cI^\zeta$ on $T$ that
that can in principle be used as in \cite[Sec.~4]{H0} to build the operation of convolution
with any of the $K^{\zeta}$,
we are also able to construct the abstract counterpart of the operators $I_k^{(\ell)}$,
acting on suitable spaces of modelled distributions.

From now on we assume $d>1$ and the first coordinate will be viewed as time.
We work with $\cD^{\gamma,\eta}_P$ spaces defined as in \cite[Sec~6]{H0}, with $P=\{(0,x):\,x\in\T^{d-1}\}$.
It will be clear that apart from notational inconvenience there is no fundamental obstacle to obtaining analogous 
results for more complicated weighted spaces, like for example those considered in \cite{GH17} that are suitable 
for solving initial-boundary value problems.

Given the setup of the previous section and an admissible model $(\Pi,\Gamma)$, one can define the maps $\cK^\zeta_m$ by replacing $\cI$ and $K$ in
\cite[Eq~5.15]{H0} by $\cI^{(m),\zeta}$ and $D_mK^\zeta$, respectively,
provided $|m|_\frs<\beta$. 
As before, we denote $\cK^{c;\ell}_m:=\cK^{\partial^\ell\delta_c}_m$,
and for $m=0$ the lower index is omitted.

We now define the lift of $I$ by a sort of 'higher order freezing of coefficients'
where, around a given fixed point $z_0$, we don't simply describe $I(b,f)$ by $\cK^{b(z_0);0} f$,
but also use higher order information about $b$. 
Set, with $\bar b=\scal{b,\bone}$ and $\hat b=b-\bar b$,
\begin{equ}\label{frI}
\frI^{(m)}_k(b,f)(z):=\sum_{|\ell|\le N'}\frac{(\hat b(z))^{ \ell}}{\ell!}
(\cK^{\bar b(z);k+\ell}_m f)(z)\;,
\end{equ}
where $N'$ is a sufficiently large integer. (How large exactly will be specified in the statement
of Theorem~\ref{thm:SchauderI} below. Since the exact value of $N'$ does not make much of a difference for 
our purpose, we do not explicitly keep track of it in our notations.) 
In the following we treat only $\frI:=\frI^{(0)}_0$.
The Schauder estimate for $\frI^{(m)}_k$ can then be formally obtained by
changing the family of kernels $(K^{(c)})_{c\in\C}$ to $(\partial_c^k\partial_x^m K^{(c)})_{c\in\C}$, as well as $\beta$ to $\beta-|m|_\frs$, and apply the Schauder estimate for the map $\frI$ built from this family.

Note that the definition \eqref{frI} is very reminiscent of how one composes modelled distributions with smooth functions $F$, see \cite[Sec~4.2]{H0}.
To justify this analogy, one needs a substitute for the Taylor expansion of $F$, which is precisely the content of Corollary \ref{cor:good} below.
Thanks to this (of course not coincidental, see Remark~\ref{rem:composition} below) similarity, the Schauder estimates for $\frI$ 
will follow immediately from the one for `constant coefficients' (Theorem \ref{thm:Schauder} below),
and a straightforward adaptation of the proof of \cite[Prop~6.13]{H0}.

Recall that we previously fixed $\alpha\in\R$, $\beta>0$. Fix further some $\gamma_1,\gamma_2,\bar\gamma>0$ and $\eta_1$,
$\eta_2$,
and $\bar \eta$ such that
\begin{equ}
\bar\gamma\leq\big(\gamma_1+(\alpha+\beta)\wedge0\big)\wedge\big(\gamma_2+\beta\big),
\quad
\alpha>\eta_2>-\frs_1,
\quad \eta_2+\beta>\bar\eta,
\quad \eta_1\geq\bar\eta\vee0.
\end{equ}

\begin{remark}
Note that if $\alpha+\beta \ge 0$ and $\eta_1 \ge 0$, then 
one can simply choose  $\bar\gamma=\gamma_1=\gamma_2+\beta$ and $\bar\eta=\eta_1 < \eta_2 + \beta< \alpha+\beta$.
\end{remark}

We assume henceforth that the kernels $K^{(c)}$ are non-anticipative, namely that they 
vanish for negative times. One then has the following, see \cite[Thm~7.1]{H0}.
\begin{theorem}\label{thm:Schauder}
With $\kappa=(\eta_2+\beta-\bar\eta
)/\frs_1$, for any $\zeta\in\cC^{-N}$, $f\in\cD^{\gamma_2,\eta_2}_P$, and any $t\in(0,1]$, one has
\begin{equ}\label{eq:Schauder}
\vn{\cK^\zeta\bR^+f}_{\gamma_2+\beta,\bar\eta;t}\lesssim t^\kappa|\zeta|_{\cC^{-N}}\vn{f}_{\gamma_2,\eta_2;t}.
\end{equ}
\end{theorem}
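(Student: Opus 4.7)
The plan is to reduce the claim to a direct application of the Schauder estimate for singular modelled distributions of \cite[Thm~7.1]{H0}, run with the parameter-dependent kernel $K^\zeta$ in place of $K$, while carefully tracking the linear dependence of every intermediate bound on $|\zeta|_{\cC^{-N}}$. Since $\cK^\zeta$ is by construction defined by the same formula as the standard $\cK$ of \cite[Eq~5.15]{H0}, with $K$ and $\cI$ replaced respectively by $K^\zeta$ and $\cI^{(m),\zeta}$, the admissibility of the model together with the usual algebraic properties make the hypotheses of that theorem directly available.

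The first step is to invoke the quantitative $\beta$-smoothing bound already recorded just after the definition of $K^\zeta$: the decomposition $K^\zeta=\sum_{n\ge 0} K^\zeta_n$ satisfies
\begin{equ}
|D^k K^\zeta_n(z)|\lesssim 2^{n(|\frs|-\beta+|k|_\frs)}|\zeta|_{\cC^{-N_0}}\;,
\end{equ}
so that every relevant kernel seminorm is controlled linearly by $|\zeta|_{\cC^{-N}}$. Moreover, the non-anticipativity assumption on $K^{(c)}$ is inherited by $K^\zeta$, since $\zeta$ only acts on the parameter $c$ and not on space-time; this is what ensures that $\cK^\zeta\bR^+ f$ remains supported in $\{z_1\ge 0\}$. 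Inspecting the proof of \cite[Thm~7.1]{H0} term by term --- the splitting into $K^\zeta\ast\cR f$, the subtraction of $\cJ^\zeta f$, and the boundary correction near $P$ --- one sees that every kernel estimate enters linearly, yielding
\begin{equ}
\vn{\cK^\zeta\bR^+ f}_{\gamma_2+\beta,\eta_2+\beta;t}\lesssim |\zeta|_{\cC^{-N}}\vn{f}_{\gamma_2,\eta_2;t}\;.
\end{equ}

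To pull out the factor $t^\kappa$, I will use that $\cK^\zeta\bR^+ f$ is supported in positive times, so for any $z$ with $0<z_1\le t$ the distance to $P$ satisfies $|z|_P\lesssim z_1^{1/\frs_1}\le t^{1/\frs_1}$. Since $\bar\eta\le\eta_2+\beta$, lowering the weight exponent from $\eta_2+\beta$ to $\bar\eta$ at the cost of an extra factor $t^{(\eta_2+\beta-\bar\eta)/\frs_1}=t^\kappa$ is then a routine interpolation of weights. Combining this with the previous display delivers the claim.

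The only genuine obstacle is bookkeeping: one has to verify that the already delicate proof of \cite[Thm~7.1]{H0} --- which handles both the singular weight at $P$ and the low-regularity reconstruction --- uses $K$ exclusively through seminorms that depend linearly on $|\zeta|_{\cC^{-N}}$. This is largely automatic since $\zeta\mapsto K^\zeta$ is linear by construction, but it requires a careful line-by-line check through the Littlewood--Paley decomposition, the Taylor remainders, and the boundary terms employed there.
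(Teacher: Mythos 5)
The paper itself gives no explicit proof of this statement, merely citing \cite[Thm~7.1]{H0}, so your job was to flesh out that citation.  Your overall structure is sound and does match what the paper intends: apply the singular Schauder estimate \cite[Thm~7.1]{H0} with $K$ replaced by $K^\zeta$, observing that the kernel seminorms depend linearly on $|\zeta|_{\cC^{-N}}$ (via the decomposition $K^\zeta=\sum_n K^\zeta_n$ with $|D^kK^\zeta_n|\lesssim 2^{n(|\frs|-\beta+|k|_\frs)}|\zeta|_{\cC^{-N_0}}$ and $N_0>N$), and since $\alpha>\eta_2$ this gives $\vn{\cK^\zeta\bR^+f}_{\gamma_2+\beta,\eta_2+\beta;t}\lesssim|\zeta|_{\cC^{-N}}\vn{f}_{\gamma_2,\eta_2;t}$ with no $t$-factor.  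The correct identification of non-anticipativity as the source of the short-time gain is also right.

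However, the final step is glossed over in a way that hides a real piece of the argument.  Lowering $\eta_2+\beta$ to $\bar\eta$ is \emph{not} a routine weight comparison once one looks at the local bounds: for a component of degree $\ell$ with $\ell<\eta_2+\beta$, the $\cD^{\gamma_2+\beta,\eta_2+\beta}_P$ norm only gives $\|\cK^\zeta\bR^+f(z)\|_\ell\lesssim|z|_P^{(\eta_2+\beta-\ell)\wedge 0}=1$, and plugging in $|z|_P\lesssim t^{1/\frs_1}$ into that bound produces no factor of $t^\kappa$ at all (while the translation bound does gain the full $t^\kappa$).  The degrees in this dangerous range $\ell<\eta_2+\beta<\alpha+\beta$ are exactly the polynomial degrees, and for these one needs the \emph{strict} decay $\|\cK^\zeta\bR^+f(z)\|_\ell\lesssim|z|_P^{\eta_2+\beta-\ell}$, which does not follow from membership in $\cD^{\gamma_2+\beta,\eta_2+\beta}_P$ but from the additional fact that all components of degree $<\eta_2+\beta$ of $\cK^\zeta\bR^+f$ vanish on $P$ (a consequence of the non-anticipativity of $K^\zeta$ combined with the $\bR^+$ truncation).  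This is the content of \cite[Lem~6.5]{H0}, which the paper itself invokes for exactly this purpose in the proof of its Lemma~4.6.  So you should state and use that lemma (or re-derive its mechanism) instead of appealing to a ``routine interpolation of weights''; as written, the case $\ell<\eta_2+\beta$ is not covered by what you wrote down.
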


\begin{corollary}\label{cor:good}
Let $f\in\cD^{\gamma_2,\eta_2}_P(V)$. Then for $c,\bar c\in \C$, $\ell\in\mathbb{N}^{d_1}$, $m\geq 0$ with $m+|\ell|+d_1+1\leq N$, and any $t\in(0,1]$, one has
\begin{equ}\label{eq: K derivative}
\Big\|\cK^{c;\ell} \bR^+f
-\sum_{|k|\leq m}\frac{(c-\bar c)^k}{k!}\cK^{\bar c;\ell+k}\bR^+f\Big\|_{\gamma_2+\beta,\bar\eta;t}
\lesssim|c-\bar c|^{m+1}t^{\kappa}\vn{f}_{\gamma_2,\eta_2;t}.
\end{equ}
\end{corollary}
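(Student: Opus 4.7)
The plan is to express the left-hand side of \eqref{eq: K derivative} as (a norm of) $\cK^\zeta\bR^+f$ for a suitable distribution $\zeta\in\cC^{-N}(\C)$ and then invoke Theorem \ref{thm:Schauder}. The crucial observation is that $\zeta\mapsto K^\zeta=\zeta(K^{(\cdot)})$ is manifestly linear, and the recursive formula for $\cK^\zeta_m$ (namely that of \cite[Eq.~5.15]{H0} with $K$ replaced by $K^\zeta$) depends linearly on the underlying kernel. Hence $\zeta\mapsto\cK^\zeta$ is itself linear. Choosing
\begin{equ}
\zeta := \partial^\ell\delta_c - \sum_{|k|\leq m}\frac{(\bar c-c)^k}{k!}\,\partial^{\ell+k}\delta_{\bar c},
\end{equ}
the difference appearing in \eqref{eq: K derivative} is then, up to an overall sign coming from the convention $(\partial^\ell\delta_c)(\phi)=(-1)^{|\ell|}\partial^\ell\phi(c)$, exactly $\cK^\zeta\bR^+f$.

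The remaining task is to bound $|\zeta|_{\cC^{-N}(\C)}$. Testing $\zeta$ against a smooth function $\phi\in\cC^N(\C)$ gives, up to a global sign,
\begin{equ}
\zeta(\phi) = \partial^\ell\phi(c) - \sum_{|k|\leq m}\frac{(c-\bar c)^k}{k!}\partial^{\ell+k}\phi(\bar c),
\end{equ}
which is precisely the $m$-th order Taylor remainder of $\partial^\ell\phi$ expanded about $\bar c$ and evaluated at $c$. The standard integral form of this remainder yields the bound $|\zeta(\phi)|\lesssim|c-\bar c|^{m+1}\,|\phi|_{\cC^{m+1+|\ell|}(\C)}$, and since by assumption $N\geq m+|\ell|+d_1+1\geq m+1+|\ell|$, taking the supremum over $\phi$ with unit $\cC^N$-norm delivers $|\zeta|_{\cC^{-N}(\C)}\lesssim |c-\bar c|^{m+1}$.

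Plugging this into Theorem \ref{thm:Schauder} immediately yields
\begin{equ}
\vn{\cK^\zeta\bR^+f}_{\gamma_2+\beta,\bar\eta;t}\lesssim t^\kappa|\zeta|_{\cC^{-N}}\vn{f}_{\gamma_2,\eta_2;t}\lesssim|c-\bar c|^{m+1}t^\kappa\vn{f}_{\gamma_2,\eta_2;t},
\end{equ}
which in particular dominates the $\|\cdot\|_{\gamma_2+\beta,\bar\eta;t}$ norm appearing on the left-hand side of \eqref{eq: K derivative}. I expect the only point warranting minor care is the verification that the construction $\zeta\mapsto\cK^\zeta$ is genuinely linear: while $\cI^\zeta$ is linear by definition, one should also confirm that $\cJ^\zeta$, and with it the truncation terms entering \cite[Eq.~5.15]{H0}, depends linearly on the kernel $K^\zeta$. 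This is however immediate from the definition of $\cJ^\zeta$ as a pairing of kernel derivatives with a fixed Taylor polynomial.
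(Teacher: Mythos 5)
Your strategy is exactly the paper's: the entire proof in the paper is the one-liner ``apply Theorem~\ref{thm:Schauder} with $\zeta=\partial^\ell\delta_c - \sum_{|k|\leq m}\frac{(c-\bar c)^k}{k!}\partial^{k+\ell}\delta_{\bar c}$,'' and the supporting observations you make (linearity of $\zeta\mapsto\cK^\zeta$, the Taylor-remainder estimate giving $|\zeta|_{\cC^{-N}}\lesssim|c-\bar c|^{m+1}$ using $m+1+|\ell|\le N$, domination of $\|\cdot\|$ by $\vn{\cdot}$) are the right way to flesh it out.

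There is, however, a sign slip in your choice of $\zeta$ and the accompanying explanation. You take $\zeta=\partial^\ell\delta_c - \sum_{|k|\leq m}\frac{(\bar c-c)^k}{k!}\partial^{\ell+k}\delta_{\bar c}$ and claim this agrees with the left-hand side of \eqref{eq: K derivative} ``up to an overall sign coming from the convention $(\partial^\ell\delta_c)(\phi)=(-1)^{|\ell|}\partial^\ell\phi(c)$.'' This cannot be right: by pure linearity of $\zeta\mapsto\cK^\zeta$ (which you invoke, and which holds regardless of any sign convention), one has
\[
\cK^{c;\ell}\bR^+f-\sum_{|k|\leq m}\frac{(c-\bar c)^k}{k!}\cK^{\bar c;\ell+k}\bR^+f
=\cK^{\zeta_0}\bR^+f,\qquad
\zeta_0:=\partial^\ell\delta_c-\sum_{|k|\leq m}\frac{(c-\bar c)^k}{k!}\partial^{\ell+k}\delta_{\bar c},
\]
with the coefficients $(c-\bar c)^k$, not $(\bar c-c)^k$. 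Your $\zeta$ and $\zeta_0$ differ by $(-1)^{|k|}$ on each summand, i.e.\ term by term for odd $|k|$, which is not a global sign and cannot be absorbed into a $(-1)^{|\ell|}$ factor. Moreover, the paper manifestly uses the convention in which $\partial^\ell\delta_c$ acts by $\phi\mapsto\partial^\ell\phi(c)$ without the classical $(-1)^{|\ell|}$: this is forced by the requirement that $K^{c;\ell}=\partial_c^\ell K^{(c)}$, which is what makes \eqref{frI} a genuine Taylor expansion in $c$ and makes the statement of the corollary a Taylor-remainder estimate at all (under the classical convention the sum in \eqref{eq: K derivative} would not be a Taylor polynomial). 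With that convention, $\zeta_0(\phi)=\partial^\ell\phi(c)-\sum_{|k|\le m}\frac{(c-\bar c)^k}{k!}\partial^{\ell+k}\phi(\bar c)$ is already the $m$-th Taylor remainder of $\partial^\ell\phi$ with no sign bookkeeping required, and the rest of your argument goes through verbatim with $\zeta_0$ in place of your $\zeta$.
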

\begin{proof}
Simply apply Theorem~\ref{thm:Schauder} with 
$
\zeta=\partial^\ell\delta_c - \sum_{|k|\leq m}\frac{(c-\bar c)^k}{k!}\partial^{k+\ell}\delta_{\bar c}
$.
\end{proof}

\begin{theorem}\label{thm:SchauderI}
Assume the above setting and suppose that $b\in\cD^{\gamma_1,\eta_1}_P(V)$ is $\C$-valued, $f\in\cD^{\gamma_2,\eta_2}_P$, where $V$ is a function-like sector with lowest nonzero homogeneity $\alpha_1$ and $N'\alpha_1>\gamma_2+\beta$.
Then $\frI(b,f)\in\cD^{\bar\gamma,\bar\eta}_P$.

If $\tilde b\in\cD^{\gamma_1,\eta_1}_P(V,\tilde\Gamma)$, $\tilde f\in\cD^{\gamma_2,\eta_2}_P(T,\tilde\Gamma)$
with another admissible model $(\tilde\Pi,\tilde\Gamma)$, then one has the following bound, for any $t\in(0,1]$,
\begin{equ}
\vn{\frI(b,\bR^+ f)\,;\frI(\tilde b,\bR^+ \tilde f)}_{\bar\gamma,\bar\eta;t}\lesssim t^{\kappa}
\big(\vn{b\,;\tilde b}_{\gamma_1,\eta_1;t}+\vn{f\,;\tilde f}_{\gamma_2,\eta_2;t}+\|(\Pi,\Gamma)-(\tilde\Pi,\tilde\Gamma)\|_{\bar\gamma;2}\big).
\end{equ}
Moreover, if $\alpha+\beta>0$, then the identity
\begin{equ}\label{eq:integration compatiible}
\cR\frI(b,f)=I(\cR b,\cR f),
\end{equ}
holds, where $I$ is defined as in \eqref{eq: I}.
\end{theorem}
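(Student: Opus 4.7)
The strategy is to treat $\frI$ as the analogue of the composition operator of \cite[Prop~6.13]{H0}, where the "smooth function" being composed is now the $c$-parametrised linear map $c\mapsto\cK^{c;0} f$. Concretely, I read \eqref{frI} as the truncated Taylor expansion in $c$ of $\cK^{c;0}f$ around $c = \bar b(z)$, evaluated at $c = b(z) = \bar b(z) + \hat b(z)$, where the truncation is at polynomial order $N'$ in the "remainder" $\hat b(z)\in V$. The role of the Taylor remainder estimate for $F$ used in \cite[Prop~6.13]{H0} is played here by Corollary \ref{cor:good}: it tells us exactly what is gained by replacing $\cK^{c;\ell}$ with its $(m{-}|k|)$-th order Taylor polynomial in $c$ around $\bar c$.

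With this dictionary in mind, the membership $\frI(b,f)\in\cD^{\bar\gamma,\bar\eta}_P$ is checked by verifying the two defining bounds. For the local (homogeneity) bound, I use that $b\in\cD^{\gamma_1,\eta_1}_P(V)$ implies $(\hat b(z))^\ell$ has lowest nonzero homogeneity $\ell\alpha_1$, while Theorem~\ref{thm:Schauder} applied with $\zeta=\partial^\ell\delta_{\bar b(z)}\in\cC^{-N}$ controls $\cK^{\bar b(z);\ell}f$; the product of these bounds is then summed over $|\ell|\le N'$, with the condition $N'\alpha_1>\gamma_2+\beta$ ensuring that the sum is effectively infinite at the level $\bar\gamma$ we care about. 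For the translation bound, the difference $\frI(b,f)(z) - \Gamma_{zw}\frI(b,f)(w)$ is rewritten by inserting and removing the Taylor polynomials of $\cK^{\bar b(z);\ell}$ around $\bar b(w)$; the resulting telescoping produces three types of error, each controllable: (i) differences coming from $\hat b(z) - \Gamma_{zw}\hat b(w)$ and from $f$, (ii) the change of base point in $\bar b(\cdot)$, handled via Corollary \ref{cor:good}, and (iii) a truncation remainder of order $\ge N'\alpha_1$, which is of higher order than $\bar\gamma$. This is a direct transcription of the composition argument in \cite[Prop~6.13]{H0}. The stability estimate then follows by the usual multilinear telescoping, with linearity of $\cK^\zeta$ in $\zeta$ and Theorem~\ref{thm:Schauder} taking care of the model-difference terms. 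I expect the translation bound to be the main obstacle: the interplay of three separate Taylor-like expansions (in $b$, in $f$, and in the parameter $c$) makes the combinatorics somewhat intricate, but with the remainder estimate from Corollary \ref{cor:good} in hand, every individual piece is routine.

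For the reconstruction identity \eqref{eq:integration compatiible} under $\alpha+\beta>0$, $\frI(b,f)$ takes values in a function-like sector, so $\cR\frI(b,f)(z_0)$ equals the $\bone$-component of $\frI(b,f)(z_0)$. By definition $\hat b(z_0)$ has vanishing $\bone$-component, hence so does $(\hat b(z_0))^\ell/\ell!$ for every $\ell\neq 0$, and only the $\ell=0$ term in \eqref{frI} contributes to the $\bone$-component. Thus
\begin{equ}
\cR\frI(b,f)(z_0) = \big(\cK^{\bar b(z_0);0}f\big)(z_0)\big|_{\bone} = \cR\big(\cK^{\bar b(z_0);0}f\big)(z_0) = \big(K^{(\bar b(z_0))}\ast\cR f\big)(z_0),
\end{equ}
where the penultimate equality is again the function-like identification and the final one is the standard reconstruction property of abstract integration against $K^{\bar b(z_0)}$ (valid since $\alpha+\beta>0$). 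Since $b$ is function-like and $\C$-valued, $\cR b(z_0)=\bar b(z_0)$, so the right-hand side is precisely $I(\cR b,\cR f)(z_0)$, as required.
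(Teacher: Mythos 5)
Your proposal is correct and follows essentially the same route as the paper: it reads $\frI$ as a composition à la \cite[Prop~6.13]{H0}, with the $T$-valued smooth function $c\mapsto F^{(\ell)}(c,\cdot):=\cK^{c;\ell}\bR^+f$ in place of a scalar smooth function, and Corollary~\ref{cor:good} playing the role of the Taylor remainder estimate; the paper organises the translation bound into the two terms
\begin{equ}
A_1=\sum\frac{(\hat b(x))^{\ell}}{\ell!}F^{(\ell)}(\bar b(x),x)-\sum\frac{(\Gamma_{xy}\hat b(y))^{\ell}}{\ell!}F^{(\ell)}(\bar b(y),x),\qquad
A_2=\sum\frac{(\Gamma_{xy}\hat b(y))^{\ell}}{\ell!}\big(F^{(\ell)}(\bar b(y),x)-\Gamma_{xy}F^{(\ell)}(\bar b(y),y)\big),
\end{equ}
which is precisely your ``three types of error'' regrouped ($A_2$ carries your ``and from $f$'' piece, while $A_1$ carries the $\hat b$-difference, the $\bar b$ base-point change via Corollary~\ref{cor:good}, and the $N'\alpha_1>\bar\gamma$ truncation remainder). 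For the stability bound the paper proceeds as you describe: linearity in $f$ for a fixed model, the interpolation-in-$b$ identity of \cite[Thm~4.16]{H0} (here made explicit as \eqref{eq:int diff}, exploiting that derivatives of $\cK^{c;\ell}$ in $c$ are again of the form $\cK^{c;\ell'}$), and the \cite[Prop~3.11]{HP15} device for two models. One small but genuine value-add in your write-up: the paper's proof stops after the continuity bounds and does not spell out the reconstruction identity \eqref{eq:integration compatiible}; your argument via the $\bone$-component, the vanishing of $\scal{(\hat b)^\ell,\bone}$ for $\ell\neq 0$, and $\cR\circ\cK^{\zeta}=K^{\zeta}\ast\cR$ is exactly the intended (implicit) justification.
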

\begin{remark}
Note that if $\alpha_1+\alpha+\beta<0$, then the equality \eqref{eq:integration compatiible} fails to hold in general even for canonical models built from a smooth noise.
\end{remark}

\begin{proof}
Denoting $F^{(\ell)}(c,z)=(\cK^{c;\ell}\bR^+f)(z)$, since $\bar\gamma\leq\gamma_2+\beta$,
one has that
$F^{(\ell)}(c,\cdot)$ is a modelled distribution with its $\vn{\cdot}_{\bar\gamma,\bar\eta; t}$ norm bounded by $t^\kappa$, and by Corollary \ref{cor:good} the map $c \mapsto F^{(0)}(c,\cdot)$ is smooth (in the usual sense) into $\cD_P^{\bar\gamma,\bar\eta}$
with its derivatives given precisely by the $F^{(\ell)}$.

It then follows from the multiplicativity of the action of the structure group that
\begin{equs}
\sum&\frac{(\hat b(x))^{ \ell}}{\ell!}
F^{(\ell)}(\bar b(x), x)
-\Gamma_{xy}\Big(\sum\frac{(\hat b(y))^{ \ell}}{\ell!}
F^{(\ell)}(\bar b(y), y)\Big)
\\&=
\Big(\sum\frac{(\hat b(x))^{ \ell}}{\ell!}
F^{(\ell)}(\bar b(x), x)
-
\sum\frac{(\Gamma_{xy}\hat b(y))^{ \ell}}{\ell!}
F^{(\ell)}(\bar b(y), x)\Big)
\\
&\quad+\sum\frac{(\Gamma_{xy}\hat b(y))^{ \ell}}{\ell!}
\big(F^{(\ell)}(\bar b(y), x)-\Gamma_{xy}F^{(\ell)}(\bar b(y),y)\big)=:A_1+A_2
\end{equs}
The term $A_1$ can be bounded precisely as in \cite[Prop~6.13]{H0}, with the only minor difference
that the smooth function $F^{(\ell)}(\cdot,x)$ that $b$ is substituted into, takes values in $T$ instead of $\bR$.
One then gets
\begin{equ}
\|A_1\|_m\lesssim t^\kappa\sum_{m_1+m_2=m}\|x-y\|_\frs^{\gamma_1-m_1}|x|_P^{\eta_1-\gamma_1}|x|_P^{(\bar\eta-m_2)\wedge0},
\end{equ}
where in the above sum $m_2$ runs over homogeneities of $\cI\cW+\bar T$, in particular its smallest value is $(\alpha+\beta)\wedge 0\geq\bar\gamma-\gamma_1$.
Therefore,
\begin{equs}
\|A_1\|_m&\lesssim t^\kappa\sum_{m_1+m_2=m}\|x-y\|_\frs^{\bar\gamma-m}\|x-y\|_\frs^{m_2+\gamma_1-\bar\gamma}|x|_P^{\eta_1-\gamma_1}|x|_P^{(\bar\eta-m_2)\wedge0}
\\
&\lesssim t^\kappa\sum_{m_1+m_2=m}\|x-y\|_\frs^{\bar\gamma-m}|x|_P^{\eta_1-\bar\gamma}|x|_P^{\bar\eta\wedge m_2}\lesssim\|x-y\|_\frs^{\bar\gamma-m}|x|_P^{\bar\eta-\bar\gamma},
\end{equs}
where in the last step we used $\eta_1\geq\bar\eta\vee0$ and $\alpha+\beta>\bar\eta$.
One the other hand, 
\begin{equ}
\|A_2\|_m\lesssim t^\kappa\sum_{m_1+m_2=m}\|x-y\|_\frs^{-m_1}\|x-y\|_\frs^{\bar\gamma-m_2}|x|_P^{\bar\eta-\bar\gamma}\lesssim t^\kappa\|x-y\|_\frs^{\bar\gamma-m}|x|_P^{\bar\eta-\bar\gamma}
\end{equ}
as required.

For a fixed model, bounding $\vn{\frI(b,\bR^+f);\,\frI(b,\bR^+\tilde f)}_{\bar\gamma,\bar\eta;t}$ is immediate
from the above thanks to the linearity of $\frI$ in the second argument.
To bound $\vn{\frI(b,\bR^+f);\,\frI(\tilde b,\bR^+f)}_{\bar\gamma,\bar\eta;t}$, one can write, as in the proof of \cite[Thm~4.16]{H0}, with $b' =b-\tilde b$,
\begin{equs}\label{eq:int diff}
\big(\frI(b,\bR^+f)&-\frI(\tilde b,\bR^+f)\big)(x)
\\&=\sum_{\ell,i}\int_0^1 (b')_i(x)
\frac{(\hat{\tilde b}(x)+\theta \hat b'(x))^{ \ell}}{\ell!}(\cK^{\bar{\tilde b}(x)+\theta\bar b'(x);\ell+e_i}\bR^+ f)(x)\,d\theta,
\end{equs}
where the sum over $i$ runs over $1,\ldots,d_1$, and $e_i$ is the $i$-th unit vector in $\R^{d_1}$.
Now one can repeat the preceding calculation, with `gaining' a factor $\vn{b'}_{\gamma_1,\eta_1;\,t}$ at each step.

Finally, to bound $\vn{\frI(b,\bR^+f);\,\frI(\tilde b,\bR^+\tilde f)}_{\bar\gamma,\bar\eta;t}$ for two different models, one can employ the trick in \cite[Prop~3.11]{HP15}.
\end{proof}

\begin{remark}\label{rem:composition}
The same argument actually shows that if $c \mapsto F(c,\cdot)$ is a smooth function from $\C$ to 
$\CD_P^{\bar\gamma,\bar\eta}$ and $b = \bar b\bone + \hat b$ is as in the statement, then the function $G$ given by
\begin{equ}
G(z) = \sum_{|\ell|\le N}\frac{(\hat b(z))^{ \ell}}{\ell!}
F^{(\ell)}(\bar b(z), z)
\end{equ}
belongs to $\CD_P^{\bar \gamma,\bar \eta}$.
This statement then has both the first part of Theorem~\ref{thm:SchauderI} and \cite[Thm~4.16]{H0} 
as corollaries.
\end{remark}

To formulate the abstract counterpart of \eqref{eq: mod nl3}, it remains to lift the operators $\hat I^{(\ell)}_k$.
Using the notation
\begin{equ}
(K^\zeta u_0)(z)=\int K_t^\zeta(x-x') u_0(x')\,dx',
\end{equ}
and identifying this function with its lift via its Taylor expansion,
we define, similarly to $\frI$, 
\begin{equ}\label{eq:hat frI}
\hat\frI^{(m)}_k(b,u_0)(z):=\sum_{|\ell|\le N'}\frac{(\hat b(z))^{ \ell}}{\ell!}
(K^{\bar b(z);k+\ell} D_mu_0)(z)\;.
\end{equ} 
Further to the preceding we fix a non-integer exponent $1>\eta_0>\bar\eta$.
This time the `constant coefficient' result we rely on is the following variant of \cite[Lem~7.5]{H0}.
\begin{lemma}\label{lem:initial}
Assume $\beta=\frs_1$. Let $u_0\in\cC^{\eta_0}$ and $\zeta\in\cC^{-N}$. Then $K^\zeta u_0\in\cD^{\gamma,\eta_0}_P$
for any $\gamma\geq\eta_0\vee0$, and 
\begin{equ}
\|K^\zeta u_0\|_{\gamma,\eta_0}\lesssim |\zeta|_{\cC^{-N}}|u_0|_{\cC^{\eta_0}}.
\end{equ}
\end{lemma}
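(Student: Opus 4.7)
The plan is to reduce the statement almost entirely to \cite[Lem~7.5]{H0}, exploiting the fact that once $\zeta$ is fixed, the kernel $K^\zeta = \zeta(K^{(\cdot)})$ is itself a single $\beta$-smoothing kernel in the sense of \cite[Ass.~5.1]{H0}. The key quantitative input, already recorded after Definition \ref{def:admiss}, is that the decomposition $K^\zeta = \sum_{n\ge 0} K^\zeta_n$ obeys
\begin{equ}
|D^k K^\zeta_n(z)| \lesssim 2^{n(|\frs|-\beta+|k|_\frs)}\,|\zeta|_{\cC^{-N}}\;,
\end{equ}
with the constants uniform in $\zeta$. The point is that this is precisely the scaling used in \cite[Lem~7.5]{H0}, with the usual kernel replaced by $K^\zeta$ and every appearance of a multiplicative constant now accompanied by a factor $|\zeta|_{\cC^{-N}}$ (and with $\beta=\frs_1$ playing the role of the parabolic degree as in the heat kernel case).

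First I would define the lift of $K^\zeta u_0$ to $\CD^{\gamma,\eta_0}_P$ by its Taylor expansion around each point $z=(t,x)$ away from $P$,
\begin{equ}
(K^\zeta u_0)(z) = \sum_{|k|_\frs<\gamma}\frac{X^k}{k!}\,\partial^k\bigl(K^\zeta\ast(\delta_0\otimes u_0)\bigr)(z)\;,
\end{equ}
where the convolution is interpreted as in \eqref{eq: Ihat}. Second, following the proof of \cite[Lem~7.5]{H0}, I would split the analysis into (a) the smoothness of the coefficients away from $P$ and (b) their growth/regularity as $|z|_P \to 0$. For (a), the kernel pieces $K^\zeta_n$ with $2^{-n}\gtrsim |z|_P$ contribute negligibly after a few $n$, so the sum truncates and standard bounds apply. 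For (b), one uses the $\cC^{\eta_0}$ regularity of $u_0$ to estimate $\partial^k(K^\zeta_n \ast (\delta_0\otimes u_0))(z)$ by $|\zeta|_{\cC^{-N}}|u_0|_{\cC^{\eta_0}} 2^{n(|k|_\frs-\eta_0)}$ (with the right convention for temporal derivatives via the exchange $\d_t \leftrightarrow \d_x^2$ for the heat kernel), and the geometric sum $\sum_n 2^{n(|k|_\frs-\eta_0)}$ summed up to $2^{-n}\sim|z|_P$ yields the required $|z|_P^{\eta_0-|k|_\frs}$ factor. The analogous estimate for the differences $\partial^k(K^\zeta u_0)(z)-\partial^k(K^\zeta u_0)(\bar z)$ encoded in the $\Gamma$-compatibility follows by the standard telescoping/mean value argument, again with the same $|\zeta|_{\cC^{-N}}$ prefactor.

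The only conceptual step is to verify that the argument of \cite[Lem~7.5]{H0} is linear in the kernel in exactly the way needed: since each step consists of integrating a single $K^\zeta_n$ against $u_0$ and bounding it using the scaling of $K^\zeta_n$, linearity in $\zeta$ is preserved throughout, and the final constant inherits the factor $|\zeta|_{\cC^{-N}}$ from the kernel bound. No additional algebraic structure (recentering, structure group action) is required beyond what is already in \cite{H0}, because the proof only uses that $K^\zeta$ is a non-anticipative $\beta$-smoothing kernel of a fixed scaling degree. I therefore do not foresee any genuine obstacle; the main item to be careful about is bookkeeping the constants to make the dependence on $|\zeta|_{\cC^{-N}}$ transparent, and this is straightforward from the uniform bound on $K^\zeta_n$ recorded above.
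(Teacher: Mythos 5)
Your proposal is correct and takes essentially the same route the paper intends: the paper gives no separate proof of this lemma, presenting it merely as a "variant of [Lem~7.5, H0]" and relying on the quantitative kernel bound $|D^k K^\zeta_n|\lesssim 2^{n(|\frs|-\beta+|k|_\frs)}|\zeta|_{\cC^{-N}}$ recorded right after Definition~\ref{def:admiss}, which is exactly the input your argument tracks. Unpacking why the proof of [Lem~7.5, H0] applies verbatim with $K$ replaced by $K^\zeta$, and why the $|\zeta|_{\cC^{-N}}$ factor propagates linearly through each kernel-piece estimate, is the right (and complete) justification.
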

The behaviour of $\hat\frI$ is then given by the following lemma.
\begin{lemma}
Assume $\beta=\frs_1$. Let $V$, $N'$, $b$ and $\tilde b$ be as in Theorem \ref{thm:SchauderI}, and let $u_0,\tilde u_0\in\cC^{\eta_0}$.
Then $\hat\frI(b,u_0)\in\cD^{\bar\gamma,\bar\eta}_P$ and with $\bar\kappa=(\eta_0-\bar\eta)/\frs_1$ one has the bounds, for any $t\in(0,1]$,
\begin{equs}
\vn{\hat\frI(b,u_0)}_{\bar\gamma,\bar\eta;t}&\lesssim
\vn{b}_{\gamma_1,\eta_1;t}|u_0|_{\cC^{\eta_0}},
\\
\vn{\hat\frI(b,u_0)\,;\hat\frI(\tilde b,\tilde u_0)}_{\bar\gamma,\bar\eta;t}&\lesssim t^{\bar\kappa}
\big(\vn{b\,;\tilde b}_{\gamma_1,\eta_1;t}+\|(\Pi,\Gamma)-(\tilde\Pi,\tilde\Gamma)\|_{\bar\gamma;2}\big)+|u_0-\tilde u_0|_{\cC^{\eta_0}}.
\end{equs}
Moreover, the following identity holds
\begin{equ}
\cR\hat\frI(b,u_0)=\hat I(\cR b,u_0).
\end{equ}
\end{lemma}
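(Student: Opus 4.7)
The overall strategy is to mirror the proof of Theorem~\ref{thm:SchauderI} step by step, with Lemma~\ref{lem:initial} taking over the role played there by the constant-coefficient Schauder bound of Theorem~\ref{thm:Schauder}. The one structural novelty is that $K^\zeta u_0$ does not vanish at the initial time, so when only the initial condition varies one cannot extract the prefactor $t^{\bar\kappa}$; this is exactly why the difference bound contains the unweighted term $|u_0-\tilde u_0|_{\cC^{\eta_0}}$.

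Concretely, I would first introduce the auxiliary objects $G^{(\ell)}(c,\cdot):=K^{c;\ell}u_0$, viewed (via their Taylor expansions) as elements of the polynomial sector of $\cD^{\bar\gamma,\bar\eta}_P$. Combining Lemma~\ref{lem:initial} with the smoothness in $c$ provided by Assumption~\ref{asn: K} shows that $c\mapsto G^{(0)}(c,\cdot)$ is a smooth map from $\C$ into $\cD^{\bar\gamma,\bar\eta}_P$ whose $\ell$-th derivative is $G^{(\ell)}(c,\cdot)$, with $\sup_{c\in\C}\|G^{(\ell)}(c,\cdot)\|_{\bar\gamma,\bar\eta}\lesssim|u_0|_{\cC^{\eta_0}}$. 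The membership $\hat\frI(b,u_0)\in\cD^{\bar\gamma,\bar\eta}_P$ together with the first norm estimate is then an immediate application of Remark~\ref{rem:composition} with $F=G^{(0)}$. For the difference estimate I would split
\begin{equ}
\hat\frI(b,u_0)-\hat\frI(\tilde b,\tilde u_0) = \bigl(\hat\frI(b,u_0)-\hat\frI(\tilde b,u_0)\bigr)+\bigl(\hat\frI(\tilde b,u_0)-\hat\frI(\tilde b,\tilde u_0)\bigr),
\end{equ}
bound the first bracket by integrating along a straight line in $b$ as in \eqref{eq:int diff} (Lemma~\ref{lem:initial} supplies the $t^{\bar\kappa}$ factor), bound the second bracket directly using linearity of $K^\zeta$ in $u_0$ (yielding the term $|u_0-\tilde u_0|_{\cC^{\eta_0}}$ without any $t$-prefactor), and handle the model difference by the trick from \cite[Prop~3.11]{HP15} exactly as in Theorem~\ref{thm:SchauderI}.

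The identity $\cR\hat\frI(b,u_0)=\hat I(\cR b,u_0)$ requires a separate argument. The observation to exploit is that \eqref{eq:hat frI} is precisely the $N'$-th order Taylor polynomial in $c$ of the function $c\mapsto (K^{(c)}u_0)(z)$ centred at $c=\bar b(z)$, with the abstract increment $\hat b(z)$ substituted for $c-\bar b(z)$. Multiplicativity of reconstruction in positive-degree sectors, combined with $\cR\hat b(z)=\cR b(z)-\bar b(z)$, implies that $\cR\hat\frI(b,u_0)(z)$ equals that same Taylor polynomial evaluated at $c=\cR b(z)$. The Taylor remainder after truncation at order $N'$ is $O(|\cR b(z)-\bar b(z)|^{N'+1})$, and because $N'$ satisfies $N'\alpha_1>\gamma_2+\beta\ge\bar\gamma$, the modelled-distribution lift of such a remainder has minimal homogeneity strictly greater than $\bar\gamma$ and therefore does not contribute to the reconstruction. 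This gives $\cR\hat\frI(b,u_0)(z)=(K^{(\cR b(z))}u_0)(z)=\hat I(\cR b,u_0)(z)$.

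I expect the analytic estimates to go through essentially mechanically once Lemma~\ref{lem:initial} and Remark~\ref{rem:composition} are in place. The delicate step is the verification of the last identity: one has to track precisely how the finite-order truncation built into the definition of $\hat\frI$ interacts with the reconstruction theorem, and this is where the choice $N'\alpha_1>\gamma_2+\beta$ inherited from Theorem~\ref{thm:SchauderI} plays a substantive role, guaranteeing that the Taylor tail is absorbed into the $\lambda^{\bar\gamma}$ error tolerated by the reconstruction.
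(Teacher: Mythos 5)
The overall strategy you describe — mirror the proof of Theorem~\ref{thm:SchauderI} with Lemma~\ref{lem:initial} in place of Theorem~\ref{thm:Schauder}, and split the difference into a $b$-part and a $u_0$-part — is the same as the paper's and is correct. However, there is a genuine gap in your justification of the $t^{\bar\kappa}$ factor on the $b$-difference. You write that ``Lemma~\ref{lem:initial} supplies the $t^{\bar\kappa}$ factor,'' but Lemma~\ref{lem:initial} contains no $t$-dependence at all: it gives a bound $\|K^\zeta u_0\|_{\gamma,\eta_0}\lesssim|\zeta|_{\cC^{-N}}|u_0|_{\cC^{\eta_0}}$ that is uniform in time, in contrast to Theorem~\ref{thm:Schauder} which does carry a $t^\kappa$. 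The actual source of the $t^{\bar\kappa}$ is a separate observation that you do not make: in the analogue of \eqref{eq:int diff} every summand carries at least one $c$-derivative $e_i$, so only $K^{c;\ell}u_0$ with $\ell\neq 0$ appear. For such $\ell$ the $\bone$-component $\scal{K^{c;\ell}u_0,\bone}$ vanishes at the initial time (since $\partial_c^\ell P_t(c,\cdot)\to 0$ as $t\to 0$), and as $\eta_0\in(0,1)$ is non-integer this means all components of $K^{c;\ell}u_0$ of degree below $\eta_0$ vanish at the initial time; \cite[Lem~6.5]{H0} then upgrades Lemma~\ref{lem:initial} to $\|K^{c;\ell}u_0\|_{\gamma,\bar\eta}\lesssim t^{\bar\kappa}|u_0|_{\cC^{\eta_0}}$. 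Without this your bound would not contain the claimed $t^{\bar\kappa}$.

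Your argument for $\cR\hat\frI(b,u_0)=\hat I(\cR b,u_0)$ is also confused, though the conclusion is correct. Since $b$ is function-like, $\cR b(z)=\scal{b(z),\bone}=\bar b(z)$, hence $\cR\hat b(z)=0$ identically. Because $\hat\frI(b,u_0)$ takes values in a sector of non-negative degree, reconstruction is pointwise and multiplicative, so $\cR\hat\frI(b,u_0)(z)=\sum_\ell\frac{(\cR\hat b(z))^\ell}{\ell!}(K^{\bar b(z);\ell}u_0)(z)=(K^{(\bar b(z))}u_0)(z)$, with every $\ell\neq 0$ term dying outright. There is no Taylor remainder to estimate — its ``size'' $O(|\cR b(z)-\bar b(z)|^{N'+1})$ is exactly zero — and your appeal to $N'\alpha_1>\gamma_2+\beta$ and to a ``modelled-distribution lift of the remainder'' with high homogeneity conflates the abstract and reconstructed levels: once you have applied $\cR$, both sides are numbers, and a small but nonzero classical remainder could not be dismissed on homogeneity grounds. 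The phrasing suggests you may be treating $\cR b(z)$ and $\bar b(z)$ as potentially distinct, which would be a misconception.
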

\begin{proof}
The proof goes precisely as that of Theorem \ref{thm:SchauderI}, with the only slight difference that the `constant coefficient' estimate seemingly does not help in obtaining a positive power of $t$.
Note however that whenever $\eta_0>0$, for any $c\in\C$ and nonzero multiindex $\ell$, $\scal{K^{c;\ell} u_0,\bone}$ vanishes at the initial time.
In particular, whenever $\eta_0<1$, all components of $K^{c;\ell} u_0$ lower than $\eta_0$ vanish at the initial time, and hence (see \cite[Lem~6.5]{H0}) one gets the estimate
\begin{equ}
\|K^{c;\ell} u_0\|_{\gamma,\bar\eta}\lesssim t^{\bar\kappa}|u_0|_{\cC^{\eta_0}}.
\end{equ}
It remains to notice that in the calculation for $\vn{\hat\frI(b,u_0)\,;\hat\frI(\tilde b, u_0)}_{\bar\gamma,\bar\eta;t}$ analogous to \eqref{eq:int diff}
we only ever encounter instances of $K$ with nonzero derivatives with respect to the parameter $c$,
hence the claimed factor $t^{\bar\kappa}$ in the lemma is indeed obtained.
\end{proof}

\section{A concrete example}\label{sec:example}

At this point, we have a completely automatic solution theory: given a quasilinear equation like
\eqref{eq: 00}, its solution is \emph{defined} as $\cR U$, where $U$ is obtained from the
system of abstract equations
\begin{equs}[eq:system]
U &= \frI(a(U), \hat \cF) + \hat\frI(a(U),u_0)\;,\\
\hat \cF&= (1- V_3 a'(U) )F(U,\Xi) + 2 V_1 a(U)a'(U)\scD U + V_2 a(U) (a'(U))^2 (\scD U)^2\\
&\quad + V_3 a(U) a''(U) (\scD U)^2
\;,\\
V_1 &= \frI_1'(a(U), \hat \cF) + \hat \frI_1'(a(U),u_0)\;,\\
V_2 &= \frI_2(a(U), \hat \cF) + \hat \frI_2(a(U),u_0)\;,\\
V_3 &= \frI_1(a(U), \hat \cF) + \hat \frI_1(a(U),u_0)\;.
\end{equs}
If $F$ was a subcritical nonlinearity to begin with and $\alpha>-2$, then the above system is again subcritical,
so one can use the construction of Section~\ref{sec:structure} to
build the corresponding regularity structure.
Provided it satisfies Assumption~\ref{asn:supreg},
one can use \cite{CH} in the form of Theorem~\ref{thm:models}
to obtain the corresponding BPHZ model.
The local well-posedness of \eqref{eq:system} is then a standard consequence of the results of 
Section~\ref{sec:4} above just as in \cite[Sec~6]{H0}.

However, as mentioned in the introduction, at this point it is not automatic
to see what counterterms appear (or whether they are even local in the solution)
in the equation solved by $\cR^\eps U^\eps$, where $U^\eps$ is obtained from solving \eqref{eq:system} with 
a renormalised smooth model.
Below we carry out the computation of these terms in the setting of the example \eqref{eq:example}.
An interesting outcome of these calculations is that if we consider the BPHZ renormalisation of our
model, then it may happen in general that non-local counterterms appear. However, as we will see,
it is possible to choose the renormalisation procedure in such a way that these non-local terms cancel
out, thus leading to the stated result.

\begin{proof}[Proof of Theorem \ref{thm:example}]
Our abstract equation reads as \eqref{eq:system}, with $F(U,\Xi)=F_0(U)(\scD U)^2+F_1(U)\Xi$.
The regularity structure is built as discussed above, where we declare the homogeneity of $\Xi$
to be $-3/2+\kappa$ for some $\kappa\in(0,(\nu\wedge\bar \nu)/2)$.
As for the models, we take a slight modification of the associated BPHZ models $(\hat \Pi^\eps,\hat\Gamma^\eps)$ obtained from Theorem \ref{thm:models}.

Recall first from Remark~\ref{rem:renormalisation} that the BPHZ renormalisation procedure is
parametrised by functions $C_\tau^\eps$ given by \eqref{e:formulaC} for $\tau \in \cW_- := \{\tau \in \cW \setminus \{\bone\}\,:\, |\tau|\le 0\}$. As a consequence of the fact that we choose $|\Xi| > -{3\over 2}$,
one can verify that all $\tau \in \cW_-$ satisfy $\db{\tau} \le 3$.
Since we furthermore assumed that the driving noise $\xi$ is centred Gaussian, the functions 
$C_\tau^\eps$ vanish identically for all $\tau$ with $\db{\tau}$ odd, so that only symbols
with $\db{\tau} = 2$ contribute to the renormalisation.

Using the graphical notation from \cite{HP15,String} (circles represent $\Xi$, plain lines represent
$\cI$ and bold red lines represent $\cI'$), the only two such symbols 
are given by \<Xi2> and \<IXi^2>. The corresponding renormalisation functions
are given by
\begin{equ}[e:defC]
C_{\<Xi2s>}^\eps(c)=\int K^{(c)}(z)\scC^\eps( z)\,dz,\quad C_{\<IXi^2s>}^{\eps}(c,\bar c)=\int \partial_xK^{(c)}(z)\partial_xK^{(\bar c)}(\bar z)\scC^\eps(\bar z-z)\,dz\,d\bar z,
\end{equ}
where $\scC^\eps=\scC\ast\rho^\eps\ast\rho^\eps$. 
In this particular case, this allows us as in \cite{BHZ} to define linear maps $M^\eps\colon T \to T$ such
that on $T_{\le 0}$ the BPHZ renormalised model $(\hat \Pi^\eps,\hat \Gamma^\eps)$  satisfies the identity
\begin{equ}[e:simple]
\hat \Pi^\eps_z \tau = \Pi^\eps_z M^\eps \tau\;,
\end{equ}
where $\Pi^\eps$ is the canonical lift of $\xi^\eps$.
(The fact that \eqref{e:simple} holds is no longer the case when $\kappa \le 0$!)
One has for example
\begin{equ}[e:renorm]
M^\eps (\zeta \otimes \<Xi2>) = \zeta \otimes \<Xi2> - \zeta(C_{\<Xi2s>}^\eps)\bone\;,\quad 
M^\eps (\zeta \otimes\eta\otimes \nu\otimes  \<IXi21>) = \zeta \otimes\eta\otimes \nu\otimes  \<IXi21> - (\zeta\otimes \eta)(C_{\<IXi^2s>}^\eps)\, \nu \otimes \<IXi>\;.
\end{equ}
At this point we note that 
if $K^{(c)}$ were exactly equal to the heat kernel instead of a compactly supported truncation,
then one would have the exact identity
\begin{equ}[e:defC1]
C_{\<Xi2s>}^\eps(c) = c C_{\<IXi^2s>}^{\eps}(c,c)\;.
\end{equ}
It turns out that this identity is crucial in order to obtain the cancellations necessary to obtain local
counterterms. We therefore define a model $(\tilde \Pi^\eps,\tilde \Gamma^\eps)$
just like the BPHZ model, but with $C_{\<Xi2s>}^\eps$ \textit{defined} by \eqref{e:defC1} instead of \eqref{e:defC}.
Since the difference between these two different definitions of $C_{\<Xi2s>}^\eps$ converges to a finite smooth
function as $\eps \to 0$, the convergence of the BPHZ model as $\eps \to 0$ also implies the convergence
of the model $(\tilde \Pi^\eps,\tilde \Gamma^\eps)$.

Note also that (modulo changing the order of some factors: recall that $\<IXi^22>\neq \<IXi^22b>$ in our
setting, but this distinction is essentially irrelevant since we always consider models such that for example
$\Pi_x (\zeta \otimes \eta \otimes \nu \otimes \<IXi^22>) = \Pi_x (\nu \otimes \zeta \otimes \eta \otimes \<IXi^22b>)$,
so that we can identify such elements for all practical purposes), one has
\begin{equ}
\cW_- = \{\<Xi>, \<Xi>X_2, \<I'Xi>, \<Xi2>,  \<IXi^2>, \<IXi21>, \<XiIXi2>, \<Xi22>,\<IXi^22>, \<tree>,\<ladder>\}\;.
\end{equ}
Inspecting \eqref{e:renorm}, as well as the analogous expressions for all other symbols of negative degree, 
we conclude that 
one has 
\begin{equ}[e:Pirenorm]
(\tilde \Pi_z^\eps \tau)(z) = (\Pi_z^\eps \tau)(z) + \scal{\bone,(M^\eps - \id)\tau}
\end{equ}
for all $\tau \in T$,
where $\scal{\bone,\sigma}$ denotes the coefficient of $\bone$ in $\sigma$.
Furthermore, the second term in this expression  is non-vanishing only if $\tau$ contains a summand in 
$T_{\<Xi2s>}$ and / or in $T_{\<IXi^2s>}$.
This is because of \eqref{e:simple}, combined with the fact that $M^\eps$ only generates terms of 
strictly positive degrees for the remaining symbols in $\cW$.

%

We now have everything in place to derive the form of the renormalised equation.
Given the $(\tilde \Pi^\eps,\tilde\Gamma^\eps)$ for some fixed $\eps > 0$, one obtains a local 
solution of the system \eqref{eq:system} in 
\begin{equ}
\cD^{3/2+2\kappa,2\kappa}(W_0)\oplus\cD^{\kappa,-2+3\kappa}\oplus
\cD^{1/2+2\kappa,-1+2\kappa}(W_1)\oplus\cD^{1+2\kappa,2\kappa}(W_0)\oplus
\cD^{3/2+2\kappa,2\kappa}(W_0),
\end{equ}
where $W_0$ is the sector generated by the Taylor polynomials and elements of the form $\cI^\zeta\tau$, and $W_1=\scD W_0$.
As a consequence of \eqref{e:Pirenorm}, we conclude as for example in \cite[Sec.~9.3]{H0} that for $\eps>0$ 
the pair $(\cR^\eps U^\eps,\cR^\eps \hat \cF^\eps)$ solves an equation just like \eqref{e:system},
but with an additional term $\scal{\bone,(M^\eps-\id)\hat \cF^\eps}$ appearing on the right-hand side of \eqref{eq: mod nl3}.
Hence $\cR^\eps U^\eps$ solves an equation just like \eqref{eq: 00}, but with an additional term
\begin{equ}\label{eq:correction}
\cE := \frac{\scal{\bone,( M^\eps-\id)\hat \cF^\eps}}{1-a'(u)\scal{\bone,V_3^\eps}}
\end{equ}
appearing on the right-hand side.


It now remains to show that if $(U, \hat \cF)$ solve \eqref{eq:system}, then 
\eqref{eq:correction} coincides with a local functional of $u = \cR U = \scal{\bone, U}$.
Write furthermore $v_i= \cR V_i = \scal{\bone,V_i}$, as well as $q=1-v_3a'(u)$, where the $V_i$ are
as in \eqref{eq:system}.
Note that $q$ is the denominator in \eqref{eq:correction}, and that this is \textit{not} a local
functional of $u$, so that we should aim for a factor $q$ to appear in the numerator as well.
To ease notation, we henceforth also omit the lower indices in $\delta_{a(u)}$ and $\delta'_{a(u)}$.
Since all symbols appearing in the expansion of the solution
are of the form $\zeta\otimes\tau$, where $\zeta$ is a tensor product of either $\delta_{a(u)}$
or $\delta'_{a(u)}$, this will hopefully not cause any confusion.

To calculate the numerator in \eqref{eq:correction}, it follows from the above discussion
that we only need to know the components of $\hat\cF$ in $T_{\<Xi2s>}$ and in $T_{\<IXi^2s>}$.
For this, note first that one has
\begin{equ}[e:RHSF]
\hat \cF = q F_1(u)\, \<Xi> + (\ldots)\;,
\end{equ}
where all terms included in $(\ldots)$ are of strictly higher degree than that of $\Xi$.
Combining \eqref{eq:system} with the definitions of $\frI$ and $\frI_1$, we then see that 
\begin{equ}
U = u\,\bone + u_{\<IXis>} \otimes \<IXi> + \tilde U\;,
\end{equ}
where $\tilde U$ takes values only in spaces $T_\tau$ with $\tau\neq \<IXi>$ of the form
$\tau = \prod_{i}\cI^{\zeta_i}(\sigma_i)$. Furthermore, by \eqref{e:RHSF} and 
the definition of $V_3$, the distribution $u_{\<IXis>}$ is given by
\begin{equ}
u_{\<IXis>} = q F_1(u)\delta + a'(u)v_3 u_{\<IXis>}\qquad \Rightarrow\qquad
u_{\<IXis>} = F_1(u)\delta\;,
\end{equ}
so that in particular
\begin{equ}
a(U) = a(u)\,\bone + (a'F_1)(u)\delta \otimes \<IXi> + (\ldots)\;.
\end{equ}
Combining this with \eqref{e:RHSF} and the expressions for $V_i$, we conclude similarly that
\begin{equs}
V_1 &= qF_1(u)\delta' \otimes \<I'Xi> +  v_1\,\bone + (\ldots)\;,\qquad
V_2 = v_2\,\bone  + (\ldots)\;,\\
V_3 &= v_3\,\bone + \bigl(qF_1(u)\delta' + v_2 (a'F_1)(u)\delta\bigr) \otimes \<IXi> + (\ldots)\;,
\end{equs}
where the terms denoted by $(\ldots)$ are of higher degree.
Combining all of this with the expression for $\hat \cF$ in \eqref{eq:system}, we finally obtain 
the next order in the development of $\hat \cF$, namely
\begin{equs}
\hat \cF &= q F_1(u)\, \<Xi> + 
\bigl(q(F_1'F_1)(u)-v_3(a''F_1^2)(u)-v_2((a')^2F_1^2)(u)\bigr)\,\delta \otimes \<Xi2> \\
&\quad -q(a'F_1^2)(u) \delta' \otimes \<Xi2> + 2q(aa'F_1^2)(u)\,\delta'\otimes \delta \otimes \<IXi^2> \\
&\quad + \bigl(q(F_1^2F_0)(u)+v_2(a(a')^2F_1^2)(u)+v_3(aa''F_1^2)(u)\bigr) \delta\otimes \delta \otimes \<IXi^2> + (\ldots)\;.
\end{equs}
Combining this with the definition of $M^\eps$, we conclude that the counterterm \eqref{eq:correction} is given by
\begin{equs}
\cE &= -{1\over q} \bigl(q(F_1'F_1)(u)-v_3(a''F_1^2)(u)-v_2((a')^2F_1^2)(u)\bigr)C_{\<Xi2s>}^\eps(a(u)) \\
&\quad +  (a'F_1^2)(u) (\d C_{\<Xi2s>}^\eps)(a(u)) - 2(aa'F_1^2)(u) (\d_1 C_{\<IXi^2s>}^\eps)(a(u),a(u)) \\
&\quad -{1\over q}\bigl(q(F_1^2F_0)(u)+v_2(a(a')^2F_1^2)(u)+v_3(aa''F_1^2)(u)\bigr)C_{\<IXi^2s>}^\eps(a(u),a(u))\;.
\end{equs}
At this point we note that by \eqref{e:defC1} and the fact that $C_{\<IXi^2s>}^\eps$ is symmetric in its two
arguments, one has the identity
\begin{equ}
(\d C_{\<Xi2s>}^\eps)(c) = C_{\<IXi^2s>}^\eps(c,c) + 2c(\d_1 C_{\<IXi^2s>}^\eps)(c,c)\;.
\end{equ}
Inserting this into the above equation and noting that the terms proportional to $v_2$ and $v_3$ cancel
out exactly thanks again to \eqref{e:defC1}, we conclude that
\begin{equ}
\cE = - C_{\<Xi2s>}^\eps(a(u)) \bigl((aF_1'F_1)(u) + (F_1^2F_0)(u) - (a'F_1^2)(u)\bigr)/a(u)\;,
\end{equ}
which is precisely as in \eqref{eq:example reno}.
\end{proof}

\begin{remark}
The expression \eqref{e:defC} also gives some information about the behaviour of $C_{\<Xi2s>}^\eps$
in the case where $\scC$ is self-similar
on small scales, i.e. $\scC(\lambda^2 t,\lambda x)=\lambda^{-3+\nu}\scC(t,x)$ for all $\lambda\in(0,1]$ and $|t|^{1/2}+|x|\leq r$, for some $r>0$.
Indeed, one can then write
\begin{equs}
C_{\<Xi2s>}^\eps(c) &=\int K^{(c)}(z)\scC^\eps( z)\,dz\approx\eps^{-3+\nu}\int K^{(c)}(z)\scC^1( \eps z)\,dz
\\
&=\eps^{\nu}\int K^{(c)}(\eps^{-1}z)\scC^1( z)\,dz\approx\eps^{-1+\nu}\int K^{(c)}(z)\scC^1( z)\,dz,
\end{equs}
where $\approx$ means that the difference of the two sides converge as $\eps\rightarrow0$ to a smooth function of $c$.
Hence, modulo changing again the renormalisation constants by a finite quantity, one can use in this
case a counterterm of the form $\eps^{\nu-1} A(u)$ for some explicit function $A$ of the solution $u$. 
\end{remark}

\bibliography{Quasi}{}
\bibliographystyle{Martin} 

\end{document}